\documentclass[letterpaper, 10 pt, conference]{ieeeconf}  

\IEEEoverridecommandlockouts                   
\overrideIEEEmargins                           

\usepackage{cite}
\usepackage{amsmath,amssymb,amsfonts}

\usepackage{graphicx}
\graphicspath{ {./images/} }
\usepackage{comment}
\usepackage{enumerate}
\usepackage{mathtools}

\DeclareMathOperator{\conv}{co}
\DeclareMathOperator{\bd}{bd}

\newcommand{\Ga}{\mathcal{G}_\alpha}
\DeclareMathOperator{\sign}{sign}

\newtheorem{theorem}{Theorem}

\newtheorem{proposition}{Proposition}
\newtheorem{definition}{Definition}
\newtheorem{assumption}{Assumption}

\usepackage{etoolbox}

\BeforeBeginEnvironment{theorem}{\vspace{2mm}}
\AfterEndEnvironment{theorem}{\vspace{2mm}}

\BeforeBeginEnvironment{lemma}{\vspace{2mm}}
\AfterEndEnvironment{lemma}{\vspace{2mm}}

\BeforeBeginEnvironment{remark}{\vspace{2mm}}
\AfterEndEnvironment{remark}{\vspace{2mm}}

\BeforeBeginEnvironment{proposition}{\vspace{2mm}}
\AfterEndEnvironment{proposition}{\vspace{2mm}}

\BeforeBeginEnvironment{definition}{\vspace{2mm}}
\AfterEndEnvironment{definition}{\vspace{2mm}}

\BeforeBeginEnvironment{assumption}{\vspace{2mm}}
\AfterEndEnvironment{assumption}{\vspace{2mm}}

\BeforeBeginEnvironment{corollary}{\vspace{2mm}}
\AfterEndEnvironment{corollary}{\vspace{2mm}}
\newcommand{\omegaf}{\omega_{\textup{f}}}
\newcommand{\thetar}{\theta_{\textup{r}}}

\title{\LARGE \bf
Generalized Multi-Constraint Extremum Seeking
}

\author{Alan Williams$^{1}$ \quad  Jorge Cortés$^{2}$ \quad Alexander Scheinker$^{3}$
\thanks{This work was supported by the U.S. Department of Energy (DOE), Office of Science, Office of High Energy Physics contract number 89233218CNA000001 and the Los Alamos National Laboratory LDRD Program Directed Research (DR) project 20220074DR}
\thanks{$^{1}$Alan Williams and $^{3}$Alexander Scheinker are with the Accelerator Operations and Technology - Instrumentation and Control (AOT-IC) group at Los Alamos National Lab, Los Alamos, NM 87545, USA (e-mail: \{awilliams, ascheink\}@lanl.gov).}
\thanks{$^{2}$Jorge Cortés is with the Department of Mechanical and Aerospace Engineering, University of California, San Diego, CA 92093-0411, USA (e-mail: cortes@ucsd.edu).}
}

\begin{document}

\maketitle
\thispagestyle{empty}
\pagestyle{empty}

\begin{abstract}
We generalize the Safe Extremum Seeking algorithm to address the minimization of an unknown objective function subject to multiple unknown inequality and equality constraints, relying on recent results of gradient flow systems. These constraints may represent safety or other critical conditions. The proposed ES algorithm functions as a general nonlinear programming tool, offering practical maintenance of all constraints and semiglobal practical asymptotic stability, utilizing a Lyapunov argument on the penalty function and the set-valued Lie derivative. The efficacy of the algorithm is demonstrated on a 2D problem.
\end{abstract}

\section{Introduction}
Optimization under constraints is a central problem in control and engineering, appearing in applications ranging from robotics to autonomous systems and real-time decision-making. Various approaches exist for solving constrained optimization problems, but the choice of method depends significantly on the information available about the objective and constraint functions. In this paper, we leverage two recently proposed methods for addressing different problems using constrained optimization: one employing control barrier functions (CBFs) and quadratic programming (QP) for general nonlinear programming, under multiple constraints with known gradients, and another using extremum seeking (ES) to optimize an objective under a single CBF constraint, when gradients of the problem are unknown. These approaches tackle different problems, but their combination presents an opportunity to develop a more general ES tool merging the ideas from both algorithms.

The first work \cite{allibhoy2022control} introduces a general constrained optimization framework based on the safe gradient flow. The method treats a general nonlinear programming problem (under multiple inequality and equality constraints) as a control problem, where constraints are enforced through a CBF-inspired QP solved at every time instant. The result is a continuous-time optimization algorithm that ensures that the feasible set is forward invariant, while driving trajectories towards solutions to the nonlinear program. The approach has been shown~\cite{delimpaltadakis2024continuous} to be closely related to projected gradients methods, and may be understood as a continuous, smoothed-version of projected gradient descent methods. The work \cite{allibhoy2022control} assumes the gradients of the objective and constraints are known which may not apply to scenarios where only functions values are measured. 

In contrast, the work~\cite{williamssemiglobalsafety} presents a ``Safe'' Extremum Seeking (Safe ES) algorithm, designed for optimization problems where the objective and CBF gradients are unknown but can be estimated through real-time measurements. ES methods introduce perturbations to the optimization variables to estimate the gradient of an unknown function in real-time. To enforce safety constraints, Safe ES also incorporates a QP-based safety filter, which prevents constraint violations while seeking the extremum of the objective function. This approach is particularly useful in settings where analytical expressions of the constraint and objective are unavailable, and it ensures ``practical safety", meaning that the constraint violation can be made arbitrarily small via more conservative design constants. However, Safe ES, in its current form, is limited to a single inequality constraint as its design is based on the classical safety filter design in \cite{ames2016control}.

\textbf{Contributions:}  
We propose a Generalized Multi-Constraint Extremum Seeking (GMC-ES) algorithm that merges ES-based gradient estimation with a QP framework to handle multiple inequality and equality constraints using only function evaluations. The method solves a QP in real time with estimated gradients, and its aggressiveness in enforcing constraints is tunable through a design parameter. Using a penalty-function as a Lyapunov function, we establish semiglobal practical asymptotic stability for a class of nonconvex problems. More so, the practical maintenance of the feasible set is also achieved. This extends Safe ES to Generalized Multi-Constraint ES (GMC-ES), going beyond single-constraint settings, providing a real-time, constraint-enforcing optimizer for nonlinear programs. Along the way, we also provide conditions for global convergence of the safe gradient flow by means of the set-valued Lie derivative.

\textbf{Literature (ES, CBFs, safe gradient flow):}  ES is a powerful control method for dynamic systems with uncertainties, with its first formal stability proof established in the seminal work of \cite{krstic2000stability} paper, sparking renewed interest in this method. Constrained ES specifically has been explored in cases where constraints are unknown, often assuming convexity, incorporating techniques such as switching ES algorithms using projection maps \cite{chen2023continuoustime} and convex problem-solving via saddle points of modified barrier functions \cite{labar2019constrained}. Studies in \cite{durr2011smooth, durr2013saddle} provide Lie bracket-based analysis of saddle-point dynamics, while \cite{atta2019geometric} considers projection-based gradient methods.  
We recommend \cite{scheinker2024100} for a comprehensive survey on extremum seeking.

The literature on safe control has grown significantly, driven by foundational work in \cite{AmesAutomotive, Wieland}, particularly in the development of Control Barrier Functions (CBFs) and safety filters to enforce constraint satisfaction in real-time control applications \cite{ames2019control}.

The safe gradient flow algorithm \cite{allibhoy2022control} has found applications in power systems \cite{colot2024optimal}, reinforcement learning \cite{feng2023bridging,mestres2025anytime}, and more such as for feedback control design for dynamical systems \cite{chen2023online}. Here, we extend the range of applications to extremum seeking control. The connections of the safe gradient flow with other dynamical system-inspired approaches to solve constrained optimization problems, in particular those based on projected dynamical systems~\cite{nagurney2012projected,delimpaltadakis2024continuous}, are discussed in~\cite{allibhoy2022control}.

\section{Preliminaries on Nonsmooth Analysis}
Suppose $V: \mathbb{R}^n \rightarrow \mathbb{R}$ is locally Lipschitz, and $\Omega_V$ is a set of measure zero containing the points where $V$ is not differentiable. Denoting the convex hull of a set $\mathcal{S}$ as $\conv(\mathcal{S})$, the \textit{generalized gradient} is defined as
\begin{equation}
    \partial V (x) \coloneqq \conv \left\{ \lim_{i \to \infty} \nabla V(x_i) : x_i \to x, x_i \notin \Omega_V \right\} \,. \label{eqn:generalized_gradient} \nonumber \\
\end{equation}
Intuitively, the generalized gradient $\partial V (x)$, at some point $x$, describes a convex combination of all nearby gradients where $V(x)$ is strictly differentiable. See \cite{clarke1990optimization} and \cite{cortes2008discontinuous} for more information on generalized gradients. 

Regularity of a function proves useful in calculating generalized gradients. The definition of regularity can be seen in \cite{clarke1990optimization}, but the following class of regular functions is sufficient background for this paper, rewritten from \cite[Proposition 2.3.12]{clarke1990optimization}, which provides a useful expression for the generalized gradient.  
\begin{proposition} \label{prop:gen_grad}
    Let $g_k: \mathbb{R}^n \rightarrow \mathbb{R}$ be differentiable functions for $k = 1,2, \ldots,m$, and let $V: \mathbb{R}^n \rightarrow \mathbb{R}$ be defined as the maximum of the collection of functions 
    \begin{equation}
        V(x) = \max\{g_k(x): k=1,2,\ldots, m\} \,. \label{eqn:generic_V}
    \end{equation}
Let $I_V(x)$ denote the set of indices $k$ for which $V(x) = g_k(x)$ --- the ``active'' functions.
    Then:
\begin{enumerate}
    \item $V$ is regular and locally Lipschitz.
    \item The generalized gradient can be expressed as
\begin{equation}
    \partial V(x) = \conv \left\{ \nabla g_i(x) : i \in I_V(x) \right\} \,. \nonumber 
\end{equation}
\end{enumerate}
\end{proposition}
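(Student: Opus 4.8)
The plan is to prove the two assertions in turn, using an explicit computation of the one-sided directional derivative as the bridge between local Lipschitzness and the generalized-gradient formula. First I would dispense with local Lipschitzness: since each $g_k$ is (continuously) differentiable, it is locally Lipschitz, and on any compact neighborhood of $x$ the pointwise maximum of the finite family $\{g_k\}$ is again locally Lipschitz, with constant bounded by the largest of the individual constants. This is what guarantees that $\partial V(x)$ is well defined in the first place, so it must come before anything else.

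The heart of the argument is to compute the one-sided directional derivative and show
\[
    V'(x;v) = \lim_{t \downarrow 0} \frac{V(x+tv) - V(x)}{t} = \max_{i \in I_V(x)} \langle \nabla g_i(x), v \rangle \,.
\]
The ``$\geq$'' direction is immediate, since $V(x+tv) \geq g_i(x+tv)$ for each active index $i$, and differentiating $g_i$ along $v$ yields the bound. The ``$\leq$'' direction rests on a continuity observation: for $t$ small enough, the index attaining the maximum that defines $V(x+tv)$ must already lie in $I_V(x)$, because a strictly inactive $g_k$ (with $g_k(x) < V(x)$) stays strictly below $V$ by continuity, so the difference quotient is dominated by that of some active $g_i$. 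This preservation-of-active-indices fact is, in my view, the main obstacle, and it is where the finiteness of the family and the continuity of the $g_k$ are essential.

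Next I would upgrade this to regularity by comparing $V'(x;v)$ with Clarke's generalized directional derivative $V^\circ(x;v) = \limsup_{y \to x,\, t \downarrow 0}(V(y+tv) - V(y))/t$. The inequality $V'(x;v) \leq V^\circ(x;v)$ holds for every locally Lipschitz function, so only the reverse needs work; I would obtain it by running the same active-index argument along arbitrary sequences $y \to x$, now invoking continuity of the gradients $\nabla g_i$ to control $\langle \nabla g_i(y), v\rangle$ in the limit. Equality of the two directional derivatives for all $v$ is precisely regularity, completing part (1).

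Finally, for part (2), I would appeal to the support-function characterization $\partial V(x) = \{\zeta : \langle \zeta, v\rangle \leq V^\circ(x;v) \text{ for all } v\}$. By regularity, $V^\circ(x;v) = V'(x;v) = \max_{i \in I_V(x)} \langle \nabla g_i(x), v\rangle$, and the right-hand side is exactly the support function of the nonempty compact convex set $\conv\{\nabla g_i(x) : i \in I_V(x)\}$. Since a nonempty compact convex set is uniquely determined by its support function, the two sets coincide, which is the claimed expression for $\partial V(x)$. The only delicate point outside the directional-derivative computation is ensuring the set is compact (it is, being the convex hull of finitely many vectors) so that the support-function duality applies cleanly.
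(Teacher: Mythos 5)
The paper does not actually prove this proposition; it imports it verbatim from Clarke's Proposition 2.3.12, so there is no in-paper argument to compare against. Your proposal is a correct, self-contained reconstruction of the standard proof behind that citation: local Lipschitzness of the finite max, the computation $V'(x;v)=\max_{i\in I_V(x)}\langle\nabla g_i(x),v\rangle$ via the preservation-of-active-indices observation, the upgrade to $V^\circ(x;v)=V'(x;v)$ (regularity) by running the same argument along $y\to x$, and finally the identification of $\partial V(x)$ with $\conv\{\nabla g_i(x):i\in I_V(x)\}$ through support-function duality. All four steps are sound, and you have correctly identified the active-index stability as the crux. One point should be made explicit rather than left to your parenthetical ``(continuously)'': the hypothesis as literally stated --- the $g_k$ merely differentiable --- is not sufficient for your proof, nor in fact for the result. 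A differentiable function need not be locally Lipschitz (consider $x^2\sin(1/x^2)$ near the origin), so part (1) can already fail with $m=1$; and your regularity step genuinely uses continuity of the $\nabla g_i$ to control $\langle\nabla g_i(y),v\rangle$ in the limit (e.g.\ via the mean value theorem applied on the segment from $y$ to $y+tv$). Clarke's own version assumes the $g_k$ Lipschitz near $x$ and regular; continuous differentiability is the clean sufficient condition, and it is what the paper actually has available, since it assumes elsewhere that the objective and constraints are differentiable with locally Lipschitz derivatives. With that strengthening stated, your argument is complete.
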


See \cite[Chapter 2]{clarke1990optimization} for more information on generalized gradients and useful calculus rules. Lipschitz functions we encounter in this work can often be written in the “pointwise maximum” form \eqref{eqn:generic_V} --- note these two basic identities
$
|g(x)|=\max \{ g(x), -g(x) \}
$
and 
$g(x) + [ h(x) ]_{+}=\max \{ g(x), g(x)+h(x) \}$ where,
we denote $[h]_+ = \max \{0,h \}$ for $h \in \mathbb{R}$.

We now define a set-valued form of the Lie derivative, utilizing the generalized gradient, which is generally defined for a set-valued map but we give a simpler definition for a Lipschitz function, sufficient for this work \cite{cortes2008discontinuous}. 
Consider the system
\begin{equation}
    \dot x = f(x) \label{eqn:generic_ode}\,.
\end{equation}
\begin{definition} \label{def:set_valued_lie}
Let $V: \mathbb{R}^n \rightarrow \mathbb{R}$ and $f: \mathbb{R}^n \rightarrow \mathbb{R}^n$ be locally Lipschitz. The \textit{set-valued Lie derivative} of $V$ with respect to $f$ is 
\begin{equation}
    \mathcal{L}_f V(x) = \{ a \in \mathbb{R} : f(x)^\top p = a \text{ for all } p \in \partial V (x) \} \,. \nonumber 
\end{equation}
\end{definition}
If $V$ is differentiable, then one recovers the standard Lie derivative $\mathcal{L}_f V(x) = \{ f(x)^\top \nabla V(x) \}$. If $V$ if differentiable everywhere then with some abuse of notation we may ignore the set notation, writing $\mathcal{L}_f V(x) = f(x)^\top \nabla V(x) $.

The following result~\cite{bacciotti1999stability, cortes2008discontinuous} is a generalization of the invariance principle, and it is the primary tool used to study stability in this work. In the case where the set-valued quantity $\mathcal{L}_f V(x)$ is the empty set, we use the convention that $\max\{\emptyset \}=-\infty$.

\begin{theorem}[Invariance Principle] \label{thm:invariance_principle}
Let $V: \mathbb{R}^n \rightarrow \mathbb{R}$ be locally Lipschitz and regular. Let $\mathcal{S} \subset \mathbb{R}^n$ be compact and positively invariant for \eqref{eqn:generic_ode}. Suppose $\max {L}_f V(x) \leq 0$ for all $x \in \mathcal{S}$. Then all solutions of \eqref{eqn:generic_ode}, with $x(t_0) \in \mathcal{S}$, converge to the largest invariant set $\mathcal{M}$ contained in  
\begin{equation}
    \mathcal{S} \cap \overline{ \{x \in \mathbb R^n : 0 \in  {L}_f V(x)\} } \,. \nonumber 
\end{equation}
\end{theorem}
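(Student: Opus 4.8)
The plan is to adapt the classical LaSalle invariance argument to the nonsmooth setting, letting the regularity of $V$ do the work that smoothness does in the classical proof. The linchpin is a chain-rule lemma (see \cite{bacciotti1999stability, cortes2008discontinuous}): since $f$ is locally Lipschitz and $\mathcal{S}$ is compact and positively invariant, any solution $x(\cdot)$ starting in $\mathcal{S}$ remains in $\mathcal{S}$ with $\dot x = f(x)$ bounded, hence $x(\cdot)$ is Lipschitz; composing with the locally Lipschitz $V$ shows $t \mapsto V(x(t))$ is absolutely continuous, and therefore differentiable for almost every $t$. First I would invoke the lemma to conclude that, at every such $t$, the derivative lies in the set-valued Lie derivative, $\frac{d}{dt}V(x(t)) \in \mathcal{L}_f V(x(t))$. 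This step is where regularity of $V$ is essential --- it upgrades a one-sided inclusion to the required chain rule for the generalized gradient --- and I expect it to be the main obstacle, since the remaining steps are a routine transcription of LaSalle's argument.

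With the inclusion in hand, the hypothesis $\max \mathcal{L}_f V(x) \leq 0$ on $\mathcal{S}$ (under the convention $\max\{\emptyset\} = -\infty$) forces $\frac{d}{dt}V(x(t)) \leq 0$ wherever the derivative exists, so $t \mapsto V(x(t))$ is nonincreasing. Because $V$ is continuous and $\mathcal{S}$ is compact, $V$ is bounded below on $\mathcal{S}$, and a bounded nonincreasing function converges; thus $V(x(t)) \to c$ for some $c \in \mathbb{R}$. Next I would introduce the positive limit set $\Omega^+$ of the trajectory. Since the solution stays in the compact, positively invariant set $\mathcal{S}$, standard arguments give that $\Omega^+$ is nonempty, compact, contained in $\mathcal{S}$, invariant, and that $x(t) \to \Omega^+$ as $t \to \infty$.

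The final step is to locate $\Omega^+$ inside the target set. By continuity of $V$ together with $V(x(t)) \to c$, we have $V \equiv c$ on $\Omega^+$. Now take any $y_0 \in \Omega^+$ and the solution $y(\cdot)$ through it, which stays in the invariant set $\Omega^+$; then $V(y(t)) = c$ is constant, so $\frac{d}{dt}V(y(t)) = 0$ for almost every $t$, and the chain-rule inclusion yields $0 \in \mathcal{L}_f V(y(t))$ for those $t$. Choosing a sequence $t_k \downarrow 0$ of such times, continuity of $y$ gives $y(t_k) \to y_0$, so $y_0 \in \overline{\{x : 0 \in \mathcal{L}_f V(x)\}}$; hence $\Omega^+ \subseteq \mathcal{S} \cap \overline{\{x : 0 \in \mathcal{L}_f V(x)\}}$. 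As $\Omega^+$ is invariant, it lies in the largest invariant set $\mathcal{M}$ contained in this intersection, and $x(t) \to \Omega^+ \subseteq \mathcal{M}$ completes the argument.
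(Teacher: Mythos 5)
The paper does not prove this theorem itself; it states it as a known result imported from \cite{bacciotti1999stability, cortes2008discontinuous}. Your argument is correct and reproduces essentially the standard proof found in those references --- the chain-rule inclusion $\frac{d}{dt}V(x(t)) \in \mathcal{L}_f V(x(t))$ a.e.\ (where regularity of $V$ is exactly what is needed), monotone convergence of $V$ along the trajectory, and the positive limit set argument locating $\Omega^+$ inside $\mathcal{S} \cap \overline{\{x : 0 \in \mathcal{L}_f V(x)\}}$ --- so there is nothing to flag.
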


The next result is Nagumo's theorem, giving a useful condition which may be used to show that a set is invariant -- note that Theorem~\ref{thm:invariance_principle} \emph{requires} an invariant, compact set $\mathcal{S}$. The original work is given in \cite{nagumo1942lage}, but more simply reformulated from \cite[Theorem 4.7]{blanchini2008set}.

\begin{theorem}[Nagumo's Theorem] \label{thm:nagumo}
Suppose $\mathcal{S} \subseteq \mathbb{R}^n$ is a closed set. Then $\mathcal{S}$ is invariant with respect to \eqref{eqn:generic_ode} with $f$ locally Lipschitz if and only if
\begin{equation}
    f(x) \in \mathcal{T}_\mathcal{S}(x) \text{ for all } x \in \bd(\mathcal{S}) \, , \nonumber %
\end{equation}
where $\mathcal{T}_\mathcal{S}(x)$ is the Bouligand tangent cone \cite[Definition 4.6]{blanchini2008set}.
\end{theorem}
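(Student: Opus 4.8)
The plan is to prove the two implications separately, treating necessity (invariance $\Rightarrow$ tangency) as the routine direction and sufficiency (tangency $\Rightarrow$ invariance) as the substantive one. First I would record a reduction: at any interior point of $\mathcal{S}$ the Bouligand tangent cone is all of $\mathbb{R}^n$, so $f(x) \in \mathcal{T}_\mathcal{S}(x)$ holds automatically there. Hence the boundary condition stated in the theorem is equivalent to requiring $f(x) \in \mathcal{T}_\mathcal{S}(x)$ for every $x \in \mathcal{S}$, which is the form I will actually use.

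For necessity, suppose $\mathcal{S}$ is invariant and fix $x_0 \in \bd(\mathcal{S})$. The solution $x(\cdot)$ of \eqref{eqn:generic_ode} with $x(0) = x_0$ stays in $\mathcal{S}$, and differentiability at $t=0$ gives $x(h) = x_0 + h f(x_0) + o(h)$. Therefore $\dist(x_0 + h f(x_0), \mathcal{S}) \leq \|x_0 + h f(x_0) - x(h)\| = o(h)$; dividing by $h$ and letting $h \to 0^+$ shows, through the $\liminf$ characterization of the tangent cone, that $f(x_0) \in \mathcal{T}_\mathcal{S}(x_0)$.

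For sufficiency I would work with $d(t) := \dist(x(t), \mathcal{S})$ along a solution with $d(0) = 0$ and show $d \equiv 0$ by a comparison (Gronwall) argument. Since $\mathcal{S}$ is closed, each $x(t)$ has a nearest point $\pi(x(t)) \in \mathcal{S}$, and for the upper Dini derivative one has $D^+ \tfrac12 d(t)^2 \leq \langle x(t) - \pi(x(t)),\, f(x(t)) \rangle$. I would split this as $\langle x(t)-\pi(x(t)),\, f(\pi(x(t)))\rangle + \langle x(t)-\pi(x(t)),\, f(x(t)) - f(\pi(x(t)))\rangle$. The vector $x(t)-\pi(x(t))$ is a proximal normal to $\mathcal{S}$ at $\pi(x(t))$, and proximal normals are polar to the Bouligand tangent cone; since $f(\pi(x(t))) \in \mathcal{T}_\mathcal{S}(\pi(x(t)))$ by hypothesis, the first term is $\leq 0$. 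The second term is bounded by $L\,d(t)^2$, where $L$ is the local Lipschitz constant of $f$. Combining gives $D^+ d(t)^2 \leq 2L\,d(t)^2$, so the comparison lemma yields $d(t)^2 \leq d(0)^2 e^{2Lt} = 0$, i.e.\ $x(t) \in \mathcal{S}$ for all $t \geq 0$.

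The main obstacle is making the sufficiency argument rigorous at its two nonsmooth points: justifying the Dini-derivative bound for $d(t)^2$ through a possibly set-valued projection $\pi$ (an envelope/Danskin-type estimate rather than an ordinary derivative), and establishing the polarity inequality $\langle f(\pi(x(t))),\, x(t)-\pi(x(t))\rangle \leq 0$ between proximal normals and the tangent cone. The latter follows by expanding $\|(\,\pi(x(t))+p\,) - z_k\|^2 \geq \|p\|^2$ for points $z_k \in \mathcal{S}$ realizing a tangent direction $v \in \mathcal{T}_\mathcal{S}(\pi(x(t)))$ and passing to the limit; once both facts are secured, the Lipschitz estimate and Gronwall step are routine.
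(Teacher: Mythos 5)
The paper does not prove this statement --- it is quoted as a classical result, with the proof deferred to \cite{nagumo1942lage} and \cite[Theorem 4.7]{blanchini2008set} --- so there is no in-paper argument to compare against. Your proof is correct and is essentially the standard modern argument for the locally Lipschitz case. The necessity direction via $\dist(x_0 + h f(x_0), \mathcal{S}) \le \|x_0 + h f(x_0) - x(h)\| = o(h)$ matches the $\liminf$ characterization of the Bouligand cone exactly. The sufficiency direction via $d(t) = \dist(x(t),\mathcal{S})$, the envelope estimate $D^+ \tfrac12 d(t)^2 \le \langle x(t)-\pi(x(t)), f(x(t))\rangle$, the polarity of proximal normals with the tangent cone, and Gronwall is the cleanest route when $f$ is locally Lipschitz (the classical Nagumo statement for merely continuous $f$ instead requires an Euler-polygon construction, which you avoid by exploiting Lipschitzness --- a legitimate simplification given the theorem's hypotheses). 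The two technical points you flag are indeed the only ones needing care, and both resolve as you indicate: the Dini bound holds because $y=\pi(x(t))$ is fixed as a competitor in the infimum defining $d(t+h)$, and the polarity inequality follows from expanding $\|p - h_k v_k\|^2 \ge \|p\|^2$ and letting $h_k \downarrow 0$. One small point worth making explicit in a full write-up: the uniform Lipschitz constant $L$ used in the Gronwall step must be taken on a compact set containing both $x(t)$ and $\pi(x(t))$ over the time interval considered, which is available since $d(\cdot)$ is continuous and one may restrict to an interval on which $d \le 1$; likewise your interior-point reduction is genuinely needed, since the Gronwall step applies the tangency hypothesis at $\pi(x(t))$, which need not lie on $\bd(\mathcal{S})$.
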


The following result characterizes Bouligand’s tangent cone for sublevel sets defined by the maximum of differentiable functions, showing that for a sublevel set $\mathcal{S} = \{ V(x) \leq \bar V \}$, with $V$ given in \eqref{eqn:generic_V}, the set $\mathcal{T}_\mathcal{S}(x)$ can be equivalently expressed using the set-valued Lie derivative $\mathcal{L}_f V(x)$

\begin{proposition} \label{prop:lie_deriv_tangent_cone}
    Let $f: \mathbb{R}^n \rightarrow \mathbb{R}^n$ be locally Lipschitz and $V: \mathbb{R}^n \rightarrow \mathbb{R}$ regular (which includes functions of the form as in \eqref{eqn:generic_V}). Consider $\mathcal{S} = \{x \in \mathbb{R}^n \, | \, V(x) \leq \bar V \}$ for $\bar V \in \mathbb{R}$. Then for $x \in \bd (\mathcal{S})$ such that $0 \notin \partial V(x)$,
    \begin{equation}
                \max \mathcal{L}_f V(x) \leq 0 \iff f(x) \in \mathcal{T}_\mathcal{S}(x) \,. \nonumber
    \end{equation}
\end{proposition}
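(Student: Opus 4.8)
My plan is to reduce the equivalence to the classical first-order description of the Bouligand tangent cone of a sublevel set, and then read off both implications from a single directional-derivative identity. Translating $V \mapsto V - \bar V$, I may assume $\bar V = 0$. Since $V$ is continuous and $\mathcal{S}$ is closed, every $x \in \bd(\mathcal{S})$ satisfies $V(x) = 0$ (if $V(x) < 0$, continuity would place a whole neighborhood of $x$ inside $\mathcal{S}$, contradicting $x \in \bd(\mathcal{S})$). Regularity of $V$ ensures that the one-sided directional derivative $V'(x;v)$ exists and equals the Clarke generalized directional derivative $V^\circ(x;v)$, which has the support-function form $V^\circ(x;v) = \max_{p \in \partial V(x)} p^\top v$. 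By Definition~\ref{def:set_valued_lie} this makes $\max \mathcal{L}_f V(x) = \max_{p \in \partial V(x)} f(x)^\top p = V^\circ(x;f(x))$, so the condition $\max \mathcal{L}_f V(x) \le 0$ is exactly $V^\circ(x;f(x)) \le 0$. The whole statement then collapses to the set identity
\[ \mathcal{T}_\mathcal{S}(x) = \{ v \in \mathbb{R}^n : V^\circ(x;v) \le 0 \}, \]
evaluated at $v = f(x)$, and I will prove this identity by double inclusion.

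For the easy inclusion $\mathcal{T}_\mathcal{S}(x) \subseteq \{ V^\circ(x;\cdot) \le 0 \}$, which does not use the constraint qualification and yields the reverse implication $f(x) \in \mathcal{T}_\mathcal{S}(x) \Rightarrow \max \mathcal{L}_f V(x) \le 0$, I would take $v$ in the cone, pick $h_k \downarrow 0$ and $v_k \to v$ with $x + h_k v_k \in \mathcal{S}$, so that $V(x + h_k v_k) \le 0 = V(x)$. Replacing $v_k$ by $v$ at the cost of a term bounded by $L\|v_k - v\|$ (with $L$ a local Lipschitz constant), dividing by $h_k$, and letting $k \to \infty$ gives $V'(x;v) \le 0$, i.e. $V^\circ(x;v) \le 0$.

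The substantive inclusion $\{ V^\circ(x;\cdot) \le 0 \} \subseteq \mathcal{T}_\mathcal{S}(x)$ is where I expect the main difficulty, and it is precisely where $0 \notin \partial V(x)$ is indispensable. Since $\partial V(x)$ is nonempty, compact, and convex and excludes the origin, strict separation furnishes a direction $d$ and a constant $c > 0$ with $p^\top d \le -c$ for all $p \in \partial V(x)$, i.e. $V^\circ(x;d) \le -c < 0$: a \emph{strict} descent direction. Any strict descent direction lies in $\mathcal{T}_\mathcal{S}(x)$, because $(V(x+hd)-V(x))/h \to V'(x;d) < 0$ forces $V(x + hd) < 0$ for small $h > 0$, so $x + hd \in \mathcal{S}$. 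For general $v$ with only $V^\circ(x;v) \le 0$, I would perturb toward $d$: sublinearity of $V^\circ(x;\cdot)$ gives $V^\circ(x; v + \varepsilon d) \le V^\circ(x;v) + \varepsilon\, V^\circ(x;d) < 0$, so $v + \varepsilon d \in \mathcal{T}_\mathcal{S}(x)$ for every $\varepsilon > 0$; as the Bouligand cone is closed and $v + \varepsilon d \to v$, I conclude $v \in \mathcal{T}_\mathcal{S}(x)$. Applied to $v = f(x)$ this is exactly $\max \mathcal{L}_f V(x) \le 0 \Rightarrow f(x) \in \mathcal{T}_\mathcal{S}(x)$.

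The crux of the argument is therefore the boundary case $V^\circ(x;v) = 0$: without a constraint qualification the tangent cone can be strictly smaller than $\{ V^\circ(x;\cdot) \le 0 \}$ (for instance when $0 \in \partial V(x)$ at a critical point), and the perturbation-by-strict-descent step would collapse; the hypothesis $0 \notin \partial V(x)$ is exactly what guarantees the strict descent direction $d$ that makes it go through. For the functions \eqref{eqn:generic_V} relevant here, Proposition~\ref{prop:gen_grad} renders the governing quantity explicit, $\max \mathcal{L}_f V(x) = \max_{i \in I_V(x)} \nabla g_i(x)^\top f(x)$, so that verifying $f(x) \in \mathcal{T}_\mathcal{S}(x)$ ultimately reduces to checking the sign of finitely many active directional derivatives.
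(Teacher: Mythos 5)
Your proof is correct in substance and reaches the paper's conclusion by a genuinely more self-contained route at the key step. The paper simply invokes Clarke's machinery: since $V$ is regular and $0 \notin \partial V(x)$, \cite[Theorem 2.4.7]{clarke1990optimization} yields at once that $\mathcal{S}$ is regular with $\mathcal{T}_\mathcal{S}(x) = \{ v : V^\circ(x;v) \leq 0 \}$, and both implications then follow by unwinding Definition~\ref{def:set_valued_lie} together with $V^\circ(x;v) = \max\{\xi^\top v : \xi \in \partial V(x)\}$. You instead prove that set identity from scratch by double inclusion: the easy inclusion via the standard difference-quotient estimate, and the substantive one by strictly separating $0$ from the compact convex set $\partial V(x)$ to obtain a strict descent direction $d$, perturbing $v$ toward $d$ using sublinearity of $V^\circ(x;\cdot)$, and finishing with closedness of the Bouligand cone. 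What this buys is transparency about exactly where the hypothesis $0 \notin \partial V(x)$ enters (it is precisely the constraint qualification preventing $\mathcal{T}_\mathcal{S}(x)$ from being strictly smaller than $\{V^\circ(x;\cdot) \leq 0\}$), at the cost of re-deriving what the cited theorem packages.

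One caveat worth flagging: your identification $\max \mathcal{L}_f V(x) = \max_{p \in \partial V(x)} f(x)^\top p = V^\circ(x;f(x))$ is not literally valid under Definition~\ref{def:set_valued_lie}. For single-valued $f$, the set $\mathcal{L}_f V(x)$ is either a singleton (when $p \mapsto f(x)^\top p$ is constant on $\partial V(x)$) or \emph{empty}, and in the empty case the paper's convention gives $\max \mathcal{L}_f V(x) = -\infty$ even though $V^\circ(x;f(x))$ may be positive; e.g.\ $V(x) = \max\{x_1, 2x_1\}$, $\bar V = 0$, $f \equiv (1,0)$ at the origin has $0 \notin \partial V(0) = [1,2]\times\{0\}$, $\max \mathcal{L}_f V(0) = -\infty \leq 0$, yet $f(0) \notin \mathcal{T}_\mathcal{S}(0) = \{v : v_1 \leq 0\}$. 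Your forward implication (and, to be fair, the proposition itself together with the paper's own proof, which quantifies over elements of a possibly empty $\mathcal{L}_f V(x)$) therefore silently assumes $\mathcal{L}_f V(x) \neq \emptyset$. The backward implication is unaffected, since there you establish the stronger conclusion $f(x)^\top p \leq 0$ for all $p \in \partial V(x)$, which implies $\max \mathcal{L}_f V(x) \leq 0$ regardless.
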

\begin{proof}
    First, note that ``Bouligand's tangent cone'' in the terminology of \cite{clarke1990optimization} is referred to as the ``contingent cone'' --- see \cite[Chapter 7]{clarke1998nonsmooth}.  Since $V$ is regular by Proposition~\ref{prop:gen_grad} and $0 \notin \partial V(x)$ for $x \in \bd (\mathcal{S})$, then by \cite[Theorem 2.4.7]{clarke1990optimization} $S$ is regular and the ``tangent cone'' (in Clarke's terminology) is equivalent to Bouligand's tangent cone (or the contingent cone in Clarke's terminology) with $\mathcal{T}_S(x) = \{ v \in \mathbb{R}^n : V^\circ(x; v) \leq 0 \}$, where $V^\circ(x; v)$ is the ``generalized directional derivative'' of $V$ at the point $x$ in the direction $v$. By \cite[Proposition 2.1.2]{clarke1990optimization}, $V^\circ(x; v) = \max \{ \xi^\top v : \xi \in \partial V(x)\}$, and so 
    \begin{equation}
        \mathcal{T}_S(x) = \{ v \in \mathbb{R}^n : \max \{ \xi^\top v : \xi \in \partial V(x)\} \leq 0 \} \, . \nonumber
    \end{equation}

    Forward implication: take some $x \in \bd(S)$ and suppose $\max \mathcal{L}_f V(x) \leq 0 $. Then for all $a \in \mathcal{L}_f V(x)$, where $a = f(x)^\top p$ for all $p \in \partial V(x)$, we have $ a= f(x)^\top p \leq 0$, which implies $f(x) \in \mathcal{T}_S$.

    Backward implication: take some $x \in \bd(S)$ and suppose $f(x) \in \mathcal{T}_S$, implying $f(x)^\top p \leq 0$ for all $p \in \partial V(x)$. For any $a \in \mathcal{L}_f V(x)$ we have $a = f(x)^\top p$, for all $p \in \partial V(x)$. Since $a=f(x)^\top p \leq 0$ then $\max \mathcal{L}_f V(x) \leq 0$.       
\end{proof}

Theorem~\ref{thm:invariance_principle} directly implies that if the system $\dot x = f(x)$ has an equilibrium $x_e$ on some compact invariant set $\mathcal{S}$, and if $\max {L}_f V(x) < 0$ for $x \in \mathcal{S} \setminus \{x_e \}$, then solutions converge to $x_e$. But in the context of this work, we ultimately are interested in the convergence to a \emph{neighborhood} around a point. 

Consider instead the system
\begin{equation}
    \dot x = \tilde F(x) = F(x) + O(a) \, , \label{eqn:generic_ode_perturbed}
\end{equation}
where $F(x_e) = 0$ and $O(a)$ is Lipschitz in $x$ but can be made arbitrarily small by choosing the parameter $a>0$ sufficiently small (on a compact set). In this scenario, we would like to show that for any $\Delta>0$, trajectories of \eqref{eqn:generic_ode_perturbed} from the initial condition $x(0) \in \bar B_\Delta(x_e)$ converge to $\bar B_\nu(x_e)$, for any $\nu>0$, no matter how small, for some $0< a< a^*$. This is known as ``SPA stability'' or semiglobal practical asymptotic stability, see \cite[Assumption 2]{nesic2010unifying} for the definition of this property. Theorem~\ref{thm:invariance_principle} is the tool we use to demonstrate this SPA property in our work, and Theorem~\ref{thm:nagumo} is the tool used to find an invariant set $\mathcal{S}$, a requirement in Theorem~\ref{thm:invariance_principle}. 

\textbf{Organization:} In Section~\ref{sec:safe_grad_flow} we study the safe gradient flow algorithm and derive conditions for which global convergence is guaranteed. Rather than imposing conditions on the nonlinear program which is being solved, as studied in \cite{allibhoy2022control}, we extend the work and derive conditions on the QP defining the dynamics, and characterize results in terms of the set-valued Lie derivative. Then in Section~\ref{sec:extremum_seeking}, we give an extremum seeking design, GMC-ES, which mimics the safe gradient flow dynamics, extending the work in \cite{williamssemiglobalsafety} to handle multiple constraints. We also give a high level overview of the analysis and present a theorem showing practical convergence and practical anytime maintenance of all constraints. Finally in Section~\ref{sec:example} we implement GMC-ES in a simple 2D example. 

\section{Stability of the safe gradient flow}\label{sec:safe_grad_flow}
\subsection{Lipschitzness, Equilibrium, and Invariance}
In this section we state the form of the safe gradient flow from \cite{allibhoy2022control}, the basis for the extremum seeking design of this work, and characterize some key results in terms of the set-valued Lie derivative.
The results in this section differ slightly in nature to the safe gradient properties described in \cite{allibhoy2022control}. Here, we impose a \emph{global constraint qualification on the QP} \eqref{eqn:Ga_def} defining the safe gradient flow, whereas the work of \cite{allibhoy2022control} imposes \emph{constraint qualifications on the nonlinear program} \eqref{eqn:nonlinear_program}. We take the most direct route of imposing global conditions on the QP \eqref{eqn:Ga_def}, since our goal is to study global convergence of \eqref{eqn:exact_safe_grad_flow} under the dynamics \eqref{eqn:Ga_def}. Therefore, many of the results in this section, \emph{which hold globally}, have analogs in the work of \cite{allibhoy2022control}, \emph{which hold locally} or in a region near the feasible set $\mathcal{C}$. For example, Proposition~\ref{prop:equilibrium} has an analog \cite[Proposition 5.1]{allibhoy2022control} which holds given a constraint qualification on nonlinear program at an optimal point $\theta^*$. Proposition~\ref{prop:time_deriv_and_inv} also has an analog \cite[Theorem 5.4]{allibhoy2022control} given a constraint qualification on the nonlinear program for $\theta \in \mathcal{C}$.

Consider the nonlinear program:
\begin{equation} \label{eqn:nonlinear_program}
\begin{aligned} 
    & \underset{\theta \in \mathbb{R}^n}{\text{minimize}} && f(\theta) \\
    &\text{subject to} && g(\theta) \leq 0 \\
    & && h(\theta) = 0,
\end{aligned}
\end{equation}
where \( f : \mathbb{R}^n \rightarrow \mathbb{R} \), \( g : \mathbb{R}^n \rightarrow \mathbb{R}^m \), \( h : \mathbb{R}^n \rightarrow \mathbb{R}^l \) are differentiable with locally Lipschitz derivatives. Let
\begin{equation}
\mathcal{C} = \{ \theta \in \mathbb{R}^n \mid g(\theta) \leq 0, h(\theta) = 0 \},
\end{equation}
denote the feasible set.

Consider the safe gradient flow dynamics:
\begin{equation} \label{eqn:exact_safe_grad_flow}
    \dot \theta = \Ga(\theta)
\end{equation}
where $\Ga$ satisfies the quadratic program with $\alpha>0$:
\begin{align}
\Ga(\theta) = 
\underset{\xi \in \mathbb{R}^n}{\arg\min} \quad
& \left\{ \tfrac12 \|\xi + \nabla f(\theta)\|^2 \right\} \nonumber \\
\text{subject to} \quad
& \frac{\partial g(\theta)}{\partial \theta} \, \xi \leq - \alpha g(\theta), \label{eqn:Ga_def} \\
& \frac{\partial h(\theta)}{\partial \theta} \, \xi = - \alpha h(\theta) . \nonumber
\end{align}
The dynamics $\Ga$ can be thought of as a gradient descent based method, and under no constraints the minimization under $\xi$ results in $\xi^* = - \nabla f(\theta)$ --- standard gradient descent.

The Karush–Kuhn–Tucker (KKT) conditions for the optimal point $\xi^*$ along with associated multipliers $(u,v) \in \mathbb{R}^m \times \mathbb{R}^l$ are:
\begin{subequations} \label{eqn:kkt_qp_all}
\begin{align}
\xi^* + \nabla f(\theta) + \frac{\partial g(\theta)}{\partial \theta}^\top u + \frac{\partial h(\theta)}{\partial \theta}^\top v &= 0 \label{eqn:kkt_qp_a} \\
\frac{\partial g(\theta)}{\partial \theta} \, \xi^* +\alpha \, g(\theta) &\leq  0 \label{eqn:kkt_qp_b} \\
\frac{\partial h(\theta)}{\partial \theta} \, \xi^* + \alpha h(\theta) &=  0 \label{eqn:kkt_qp_c} \\
u &\geq 0 \label{eqn:kkt_qp_d} \\
u^\top \left( \frac{\partial g(\theta)}{\partial \theta} \, \xi^* + \alpha \, g(\theta) \right) &= 0 \label{eqn:kkt_qp_e} 
\end{align}
\end{subequations}
where $\xi^* = \Ga(\theta)$. Let the Lagrange multiplier set be defined as
\begin{align}\label{eqn:lag_mult_set}
    \Lambda(\theta) = &\{ (u,v) \in \mathbb{R}^m_{\geq 0} \times \mathbb{R}^k \; | \; \exists \xi^* \in \mathbb{R}^n \text{ such that} \nonumber \\ 
    & (\xi^*, u, v) \text{ solves \eqref{eqn:kkt_qp_all}} \} .
\end{align}

We impose the Mangasarian–Fromovitz constraint qualification (MFCQ) for all $\theta \in \mathbb{R}^n$ on the QP in \eqref{eqn:Ga_def}.

\begin{assumption}[MFCQ holds for the QP]\label{assum:MFCQ-QP}
MFCQ holds for the QP \eqref{eqn:Ga_def} at each $\theta \in \mathbb{R}^n$. Namely, for each \(\theta \in \mathbb{R}^n\):
\begin{enumerate}
    \item \(\frac{\partial h(\theta)}{\partial \theta}\) has full row rank, and
    \item \(\exists d\in\mathbb{R}^n\) such that
    \(\nabla h_j(\theta)^\top d = 0\) for all \(j=1,\dots,l\) and
    \(\nabla g_i(\theta)^\top d < 0\) for all \(i\in \mathcal{A}_{\mathrm{QP}}(\theta)\), where
\end{enumerate}
\begin{equation}  
\mathcal{A}_{\mathrm{QP}}(\theta):=\big\{ i\in\{1,\dots,m\}\; \big|\; \nabla g_i(\theta)^\top \xi^\ast + \alpha\, g_i(\theta)=0 \big\}. \nonumber
\end{equation}
\end{assumption}

\begin{proposition}[MFCQ implies bounded multipliers]
\label{prop:MFCQ-bounded}
Suppose Assumption~\ref{assum:MFCQ-QP} holds.
\begin{enumerate}[i)]
    \item Then the Lagrange multiplier set $\Lambda(\theta)$ associated with \eqref{eqn:kkt_qp_all} is nonempty and bounded. In particular, for all $\theta \in \mathbb{R}^n$ there exists an $M(\theta) < \infty$ such that every $(u,v) \in \Lambda(\theta)$ satisfies $u_i\le M(\theta)$ and $|v_j| \le M(\theta)$ for all $i \in \{1, \dots, m\}$ and $j \in \{ 1, \dots, l\}$. 
    \item Slater’s condition holds for \eqref{eqn:Ga_def} at the same $\theta$, which means $\exists \xi' \in\mathbb{R}^n$ satisfying $\nabla g_i(\theta)^\top \xi' + \alpha g_i(\theta)< 0$ and $\nabla h_j(\theta)^\top \xi' + \alpha h_j(\theta) =0$ for all $i \in \{1, \dots, m\}$ and $j \in \{ 1, \dots, l\}$.
\end{enumerate}
\end{proposition}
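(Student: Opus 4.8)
The plan is to fix an arbitrary $\theta\in\mathbb{R}^n$ and treat \eqref{eqn:Ga_def} as a strongly convex program in $\xi$ with \emph{affine} constraints. Note that Assumption~\ref{assum:MFCQ-QP} is phrased through the minimizer $\xi^\ast=\Ga(\theta)$ and its active set $\mathcal{A}_{\mathrm{QP}}(\theta)$, so it presupposes that the QP is feasible and $\xi^\ast$ exists; existence and uniqueness also follow from strong convexity and coercivity of $\tfrac12\|\xi+\nabla f(\theta)\|^2$ over the nonempty closed feasible set. I would then prove the claims in the order: nonemptiness of $\Lambda(\theta)$, the explicit Slater point of part (ii), and finally boundedness in part (i), which is the technical core.

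For nonemptiness, since the constraints in \eqref{eqn:Ga_def} are affine in $\xi$, a constraint qualification (e.g.\ Abadie's) holds automatically, so the KKT system \eqref{eqn:kkt_qp_all} is necessary at the optimum $\xi^\ast$. Hence at least one multiplier pair $(u,v)$ exists and $\Lambda(\theta)\neq\emptyset$.

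For part (ii) I would construct the Slater point explicitly as $\xi'=\xi^\ast+t\,d$, where $d$ is the MFCQ direction of Assumption~\ref{assum:MFCQ-QP} and $t>0$ is small. The equality residual is preserved because $\xi^\ast$ satisfies \eqref{eqn:kkt_qp_c} exactly and $\nabla h_j(\theta)^\top d=0$, giving $\nabla h_j(\theta)^\top\xi'+\alpha h_j(\theta)=0$ for every $j$. For each active index $i\in\mathcal{A}_{\mathrm{QP}}(\theta)$ the residual $\nabla g_i(\theta)^\top\xi^\ast+\alpha g_i(\theta)=0$ is pushed strictly negative by the term $t\,\nabla g_i(\theta)^\top d<0$; for each inactive index the residual is already strictly negative at $\xi^\ast$, hence remains negative for all sufficiently small $t$. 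Choosing $t$ below the resulting finite threshold yields $\xi'$ satisfying Slater's condition.

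The main obstacle, and the step I would treat most carefully, is boundedness in part (i), for which I would run a Gauvin-type contradiction argument. Suppose $\Lambda(\theta)$ is unbounded; pick $(u^k,v^k)\in\Lambda(\theta)$ with $\|(u^k,v^k)\|\to\infty$, normalize to $(\bar u^k,\bar v^k)$ on the unit sphere, and pass to a convergent subsequence with limit $(\bar u,\bar v)$, $\|(\bar u,\bar v)\|=1$. Dividing the stationarity relation \eqref{eqn:kkt_qp_a} by $\|(u^k,v^k)\|$ and letting $k\to\infty$ annihilates the bounded term $\xi^\ast+\nabla f(\theta)$ and leaves $\tfrac{\partial g(\theta)}{\partial\theta}^\top\bar u+\tfrac{\partial h(\theta)}{\partial\theta}^\top\bar v=0$, while complementary slackness \eqref{eqn:kkt_qp_e} forces $\bar u_i=0$ for $i\notin\mathcal{A}_{\mathrm{QP}}(\theta)$. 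Taking the inner product of this limit relation with $d$ kills the equality block (since $\nabla h_j(\theta)^\top d=0$) and yields $\sum_{i\in\mathcal{A}_{\mathrm{QP}}(\theta)}\bar u_i\,(\nabla g_i(\theta)^\top d)=0$, a sum of nonpositive terms, so every $\bar u_i=0$ and thus $\bar u=0$; then $\tfrac{\partial h(\theta)}{\partial\theta}^\top\bar v=0$ together with full row rank of $\tfrac{\partial h(\theta)}{\partial\theta}$ forces $\bar v=0$, contradicting $\|(\bar u,\bar v)\|=1$. Boundedness then supplies a finite $M(\theta)$ bounding every $u_i$ and $|v_j|$. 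The delicate points I would verify are that complementary slackness survives the normalization and that the limit-subsequence interchange is legitimate; the remaining estimates are routine.
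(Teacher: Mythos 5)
Your proposal is correct and follows essentially the same route as the paper, which simply cites Gauvin's theorem for the nonemptiness/boundedness of $\Lambda(\theta)$ under MFCQ and the proof of Bertsekas' Proposition 3.3.8 for the MFCQ-to-Slater implication; your normalization/contradiction argument for part (i) and the construction $\xi'=\xi^\ast+t\,d$ for part (ii) are precisely the arguments those references contain. The only difference is that your write-up is self-contained where the paper delegates to citations, and your observation that Assumption~\ref{assum:MFCQ-QP} presupposes feasibility of the QP (since $\mathcal{A}_{\mathrm{QP}}(\theta)$ is defined through $\xi^\ast$) is a fair and accurate reading.
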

\begin{proof}
    Statement i) the set $\Lambda(\theta)$ is bounded by \cite{gauvin1977necessary}. Statement ii) see \cite[Proposition 3.3.8 proof]{bertsekas1997nonlinear}.
    \end{proof}

 The following proposition gives an additional fact and $\Ga(\theta)$ is locally Lipschitz.

\begin{proposition}[Lipschitzness of safe gradient flow] \label{prop:Ga_lipschitz}
Under Assumption~\ref{assum:MFCQ-QP}, for all $\theta \in \mathbb{R}^n$, $\Ga(\theta)$ is well-defined and locally Lipschitz.
\end{proposition}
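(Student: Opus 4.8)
The plan is to recognize \eqref{eqn:Ga_def} as a parametric Euclidean projection and then establish continuity of the solution map through a Lipschitzian-stability result for strongly convex parametric quadratic programs.

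First I would settle well-definedness. Writing $\tfrac12\|\xi+\nabla f(\theta)\|^2 = \tfrac12\|\xi - (-\nabla f(\theta))\|^2$, the QP is exactly the projection of $-\nabla f(\theta)$ onto the polyhedron $K(\theta) := \{\xi \in \mathbb{R}^n : \tfrac{\partial g(\theta)}{\partial\theta}\xi \le -\alpha g(\theta),\ \tfrac{\partial h(\theta)}{\partial\theta}\xi = -\alpha h(\theta)\}$. The objective is strongly convex in $\xi$ (its Hessian is the identity), and by Proposition~\ref{prop:MFCQ-bounded}(ii) Slater's condition holds at every $\theta$, so $K(\theta)$ is a nonempty closed convex set. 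A strongly convex, hence coercive, function attains a unique minimizer over such a set, so $\Ga(\theta)$ is a single well-defined point and equals $\Pi_{K(\theta)}(-\nabla f(\theta))$.

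For local Lipschitzness, I would exhibit the QP as a parametric program whose data --- the cost vector $\nabla f(\theta)$, the constraint matrices $\tfrac{\partial g(\theta)}{\partial\theta}$ and $\tfrac{\partial h(\theta)}{\partial\theta}$, and the right-hand sides $-\alpha g(\theta)$ and $-\alpha h(\theta)$ --- are all locally Lipschitz in $\theta$, since $f,g,h$ have locally Lipschitz derivatives. I would then invoke a Lipschitzian-stability theorem for strongly convex parametric QPs (in the spirit of Hager's constrained-process result or Robinson's strong regularity). The hypotheses such a theorem requires are precisely: a uniformly positive definite cost Hessian (here the identity); locally Lipschitz dependence of the data on the parameter (just verified); and a constraint qualification holding uniformly on a neighborhood, which is furnished by Assumption~\ref{assum:MFCQ-QP} together with the bounded-multiplier conclusion of Proposition~\ref{prop:MFCQ-bounded}(i). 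These yield local Lipschitz continuity of the unique primal solution $\xi^\ast(\theta) = \Ga(\theta)$.

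The main obstacle is the change of active set: as $\theta$ moves, the index set $\mathcal{A}_{\mathrm{QP}}(\theta)$ of active inequalities can jump, so one cannot simply fix an active set and apply the implicit function theorem to the KKT system \eqref{eqn:kkt_qp_all}. Under MFCQ (rather than the stronger LICQ) the multipliers need not be unique, so the multiplier map is set-valued and generally not Lipschitz; the point is that strong convexity of the cost still pins down a unique, Lipschitz \emph{primal} trajectory. Making this rigorous requires promoting the pointwise constraint qualification to a uniform one on a compact neighborhood of a given $\theta_0$: since $\mathcal{A}_{\mathrm{QP}}(\cdot)$ is upper semicontinuous and the Mangasarian--Fromovitz direction $d$ at $\theta_0$ remains strictly feasible nearby, the bound $M(\cdot)$ from Proposition~\ref{prop:MFCQ-bounded}(i) stays bounded on that neighborhood, supplying the uniform modulus that the sensitivity theorem needs to deliver a local Lipschitz constant for $\Ga$.
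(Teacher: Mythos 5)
Your proposal is correct and follows essentially the same route as the paper: both treat \eqref{eqn:Ga_def} as a strongly convex parametric QP (equivalently a projection onto $K(\theta)$) with locally Lipschitz data, use Slater/MFCQ as the constraint qualification, and conclude local Lipschitzness of the unique primal solution from a known sensitivity theorem --- the paper's specific choice being Daniel's result \cite[Theorem 3.10]{daniel1974continuity} rather than the Hager/Robinson-style result you gesture at. Your added care about non-unique multipliers under MFCQ and about making the constraint qualification uniform on a neighborhood matches the hypotheses the paper verifies for Daniel's theorem (vacuous $B_0$, strict slack from Slater, constant rank of $\partial h/\partial\theta$).
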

\begin{proof}
    Under Assumption~\ref{assum:MFCQ-QP}, MFCQ provides a direction $d$ with $\nabla h_j^\top d=0$ and $\nabla g_i^\top d<0$ for all active $i$, so no inequality can be an equality for all feasible points; hence in \cite[(3.5)-(3.6)]{daniel1974continuity}, $B_0$ is vacuous, the strict–slack condition for $A_0$ follows from Slater's condition (implied by MFCQ), and the constant–rank hypothesis \cite[(3.8)]{daniel1974continuity} reduces to $\mathrm{rank}\,D(\theta)= l$ uniformly by the full row rank of $\partial h/ \partial \theta$. Since the the problem data in \eqref{eqn:Ga_def} are locally Lipschitz in $\theta$ and the quadratic cost is strongly convex, \cite[Theorem 3.10]{daniel1974continuity} applies, yielding $\Ga(\theta)$ locally Lipschitz in $\theta$.
\end{proof}

Under a constraint qualification, a local solution $\theta^* \in \mathcal{C}$ to \eqref{eqn:nonlinear_program} satisfies the KKT conditions
\begin{subequations} \label{eqn:kkt_np_all}
\begin{align} 
\nabla f(\theta^*) + \frac{\partial g(\theta^*)}{\partial \theta}^\top u^* + \ \frac{\partial h(\theta^*)}{\partial \theta}^\top v^* &= 0, \label{eqn:kkt_np_a}\\
g(\theta^*) &\leq 0, \label{eqn:kkt_np_b}\\
h(\theta^*) &= 0, \label{eqn:kkt_np_c}\\
u^* &\geq 0, \label{eqn:kkt_np_d}\\
(u^*)^\top g(\theta^*) &= 0, \label{eqn:kkt_np_e}
\end{align}
\end{subequations}
with some Lagrange multipliers $(u^*,v^*) \in \mathbb{R}^m \times \mathbb{R}^l$ associated with $\theta^* \in \mathbb{R}^n$ \cite{bertsekas1997nonlinear}. We make the assumption that the KKT conditions uniquely determines the global solution to the nonlinear problem \eqref{eqn:nonlinear_program}, in anticipation of a global study of convergence.
\begin{assumption} \label{assum:kkt}
    There exists a unique solution $\theta^* \in \mathbb{R}^n$ to \eqref{eqn:nonlinear_program}, and is the only point that satisfies \eqref{eqn:kkt_np_all} with some Lagrange multipliers $(u^*, v^*) \in \mathbb{R}^m \times \mathbb{R}^k$.
\end{assumption}

If $\theta^*$ is a local solution to \eqref{eqn:nonlinear_program}, and a constraint qualification holds at $\theta^*$, then the KKT conditions are satisfied at $\theta^*$. However, the converse is not true: a point $\theta$ may satisfy the KKT conditions, without being a (local or global) solution of \eqref{eqn:nonlinear_program}. Consequently, to ensure correctness in the study of the global convergence of \eqref{eqn:exact_safe_grad_flow}, we impose Assumption~\ref{assum:kkt}. The next result also follows from the work of \cite[Proposition 5.1]{allibhoy2022control}, demonstrating a single equilibrium of \eqref{eqn:exact_safe_grad_flow}.

\begin{proposition}[Equilibrium at KKT point] \label{prop:equilibrium} Under Assumptions~\ref{assum:MFCQ-QP} and \ref{assum:kkt}, $\Ga(\theta) = 0$ if and only if $\theta = \theta^*$. 
\end{proposition}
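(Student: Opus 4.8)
The plan is to exploit the structural correspondence between the KKT system \eqref{eqn:kkt_qp_all} of the QP \eqref{eqn:Ga_def} and the KKT system \eqref{eqn:kkt_np_all} of the nonlinear program \eqref{eqn:nonlinear_program}. The central observation is that substituting $\xi^\ast = 0$ into \eqref{eqn:kkt_qp_all} reduces each condition, after cancelling the strictly positive factor $\alpha$, exactly to the corresponding condition in \eqref{eqn:kkt_np_all}. Concretely, \eqref{eqn:kkt_qp_a} with $\xi^\ast=0$ becomes the stationarity condition \eqref{eqn:kkt_np_a}; \eqref{eqn:kkt_qp_b} becomes $g(\theta)\le 0$; \eqref{eqn:kkt_qp_c} becomes $h(\theta)=0$; \eqref{eqn:kkt_qp_d} is unchanged; and \eqref{eqn:kkt_qp_e} becomes the complementarity condition \eqref{eqn:kkt_np_e}. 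Thus $(\xi^\ast,u,v)=(0,u,v)$ solves the QP KKT system if and only if $(\theta,u,v)$ solves the NLP KKT system.

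For the backward implication, I would start from $\theta=\theta^\ast$, which by definition satisfies \eqref{eqn:kkt_np_all} with some multipliers $(u^\ast,v^\ast)$, and read the correspondence in reverse to certify that $(0,u^\ast,v^\ast)$ satisfies \eqref{eqn:kkt_qp_all}. Since the cost in \eqref{eqn:Ga_def} is strongly convex, the KKT conditions are sufficient for global optimality, so $\xi^\ast=0$ is the unique minimizer, i.e.\ $\Ga(\theta^\ast)=0$.

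For the forward implication, suppose $\Ga(\theta)=0$. By Assumption~\ref{assum:MFCQ-QP} and Proposition~\ref{prop:MFCQ-bounded}, MFCQ holds for the QP, which is a constraint qualification guaranteeing that the optimizer $\xi^\ast=0$ admits Lagrange multipliers, so $\Lambda(\theta)\ne\emptyset$; picking any $(u,v)\in\Lambda(\theta)$, the triple $(0,u,v)$ satisfies \eqref{eqn:kkt_qp_all}. Applying the correspondence forward shows that $\theta$ together with $(u,v)$ satisfies the NLP KKT system \eqref{eqn:kkt_np_all}. By Assumption~\ref{assum:kkt}, the unique point satisfying \eqref{eqn:kkt_np_all} is $\theta^\ast$, hence $\theta=\theta^\ast$.

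The bookkeeping itself is routine; the conceptual care lies in correctly invoking convexity versus the constraint qualification on the two sides. Sufficiency of the KKT conditions for the QP, needed for the backward direction, is automatic from strong convexity and requires no qualification, whereas necessity of the KKT conditions at the optimizer, needed for the forward direction to produce multipliers, is precisely where MFCQ enters through Proposition~\ref{prop:MFCQ-bounded}. The only point to watch is that the factor $\alpha>0$ multiplies $g$ and $h$ in the QP constraints; since it is strictly positive it divides out without affecting the sign in \eqref{eqn:kkt_qp_b} or the complementarity in \eqref{eqn:kkt_qp_e}, so the equivalence with \eqref{eqn:kkt_np_all} is exact.
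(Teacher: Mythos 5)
Your proposal is correct and follows essentially the same route as the paper: setting $\xi^\ast=0$ in \eqref{eqn:kkt_qp_all} and cancelling $\alpha>0$ to recover \eqref{eqn:kkt_np_all} for the forward direction (with Assumption~\ref{assum:kkt} giving uniqueness of $\theta^\ast$), and verifying $(0,u^\ast,v^\ast)$ as a KKT triple of the strongly convex QP for the backward direction. Your added care about where MFCQ (necessity of multipliers, via Proposition~\ref{prop:MFCQ-bounded}) versus convexity (sufficiency) is invoked is a slightly more explicit account of what the paper leaves implicit, but it is the same argument.
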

\begin{proof}
    If $\Ga(\theta)=\xi^*=0$, then the conditions in \eqref{eqn:kkt_qp_all} with $\alpha>0$ can be simplified to exactly that of \eqref{eqn:kkt_np_all}, which implies $\theta = \theta^*$ uniquely by Assumption~\ref{assum:kkt}. If $\theta = \theta^*$, notice that $\xi^*=0$ satisfies \eqref{eqn:kkt_qp_all} along with $u=u^*$ and $v=v^*$. Since the QP in \eqref{eqn:Ga_def} is strongly convex and Slater's condition holds, $\xi^*=0$ is unique.
\end{proof}

We define
\begin{equation}
\mathcal{C}_{\bar g, \bar h} = \{ \theta \in \mathbb{R}^n \mid g(\theta) \leq \bar g, |h(\theta)| \leq \bar h \},
\end{equation}
where $\bar g \in \mathbb{R}^m_{\geq 0}$, $\bar h \in \mathbb{R}^l_{\geq 0}$. Note that $\mathcal{C} \subseteq \mathcal{C}_{\bar g, \bar h}$.

\begin{proposition}[Time derivatives and Invariance] \label{prop:time_deriv_and_inv}
Under Assumptions~\ref{assum:MFCQ-QP} and \ref{assum:kkt} the following hold:
\begin{enumerate}[i)]
    \item All sets of the form $\mathcal{C}_{\bar g, \bar h}$, including the feasible set $\mathcal{C}$, are invariant with respect to \eqref{eqn:exact_safe_grad_flow}.
    \item For any $(u,v) \in \Lambda(\theta)$ satisfying \eqref{eqn:kkt_qp_all}, 
\begin{equation*}
    \mathcal{L}_{\Ga} f(\theta) =  -|| \Ga(\theta) ||^2 + \alpha u^\top g(\theta) + \alpha v^\top h(x)
\end{equation*}
\item For any $\theta \in \mathcal{C}$,
\begin{equation*}
    \mathcal{L}_{\Ga} f(\theta)  \leq 0
\end{equation*}
and $\mathcal{L}_{\Ga} f(\theta) = 0$ if and only if $\theta = \theta^*$.
\end{enumerate}    
\end{proposition}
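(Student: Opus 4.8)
The plan is to treat the three claims in the order ii), iii), i), since the Lie-derivative identity in ii) is the computational core and iii) is an immediate specialization of it, while i) is logically independent and best handled by a direct comparison argument rather than the nonsmooth tangent-cone machinery.

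For ii), I would first observe that because $f$ is differentiable, $\partial f(\theta) = \{\nabla f(\theta)\}$ and hence $\mathcal{L}_{\Ga} f(\theta)$ is the singleton $\Ga(\theta)^\top \nabla f(\theta)$ (the remark following Definition~\ref{def:set_valued_lie}); the whole content is to evaluate this inner product. Writing $\xi^\ast = \Ga(\theta)$ and substituting the stationarity condition \eqref{eqn:kkt_qp_a}, namely $\nabla f(\theta) = -\xi^\ast - \frac{\partial g}{\partial \theta}^\top u - \frac{\partial h}{\partial \theta}^\top v$, into $\nabla f(\theta)^\top \xi^\ast$ produces $-\|\xi^\ast\|^2 - u^\top \frac{\partial g}{\partial \theta}\xi^\ast - v^\top \frac{\partial h}{\partial \theta}\xi^\ast$. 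The complementary-slackness condition \eqref{eqn:kkt_qp_e} replaces $u^\top \frac{\partial g}{\partial \theta}\xi^\ast$ by $-\alpha\, u^\top g(\theta)$, and the equality condition \eqref{eqn:kkt_qp_c} replaces $\frac{\partial h}{\partial \theta}\xi^\ast$ by $-\alpha\, h(\theta)$; collecting terms yields the stated identity. I would flag that the resulting value is independent of which pair $(u,v) \in \Lambda(\theta)$ is chosen, consistent with the left-hand side depending only on $\Ga(\theta)$.

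For iii), I restrict ii) to $\theta \in \mathcal{C}$. Then $h(\theta) = 0$ annihilates the last term, leaving $\mathcal{L}_{\Ga} f(\theta) = -\|\Ga(\theta)\|^2 + \alpha\, u^\top g(\theta)$. Since $u \ge 0$ by \eqref{eqn:kkt_qp_d} and $g(\theta) \le 0$, the term $\alpha\, u^\top g(\theta)$ is nonpositive, so $\mathcal{L}_{\Ga} f(\theta) \le -\|\Ga(\theta)\|^2 \le 0$. Both nonpositive terms vanish simultaneously iff $\Ga(\theta)=0$ (with $\xi^\ast = 0$, \eqref{eqn:kkt_qp_e} forces $u^\top g(\theta)=0$ automatically), and $\Ga(\theta)=0 \iff \theta=\theta^\ast$ by Proposition~\ref{prop:equilibrium}, which gives the equality characterization. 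For i), I would instead use that $\Ga$ is locally Lipschitz (Proposition~\ref{prop:Ga_lipschitz}), so solutions of \eqref{eqn:exact_safe_grad_flow} exist, are unique, and are $C^1$; along such a solution each $t \mapsto g_i(\theta(t))$ is $C^1$ with $\tfrac{d}{dt} g_i(\theta(t)) = \nabla g_i(\theta(t))^\top \Ga(\theta(t)) \le -\alpha\, g_i(\theta(t))$ by the QP feasibility \eqref{eqn:kkt_qp_b}. The comparison lemma then gives $g_i(\theta(t)) \le g_i(\theta(t_0))\, e^{-\alpha(t-t_0)}$, and a short sign case distinction (using $\bar g_i \ge 0$) shows $g_i(\theta(t_0)) \le \bar g_i$ propagates to all $t \ge t_0$. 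The equality constraints are exact: \eqref{eqn:kkt_qp_c} yields $\tfrac{d}{dt} h_j(\theta(t)) = -\alpha\, h_j(\theta(t))$, so $|h_j(\theta(t))| = |h_j(\theta(t_0))|\, e^{-\alpha(t-t_0)} \le \bar h_j$; the case $\bar g = 0,\ \bar h = 0$ recovers invariance of $\mathcal{C}$.

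The routine part is ii)-iii), where the only care needed is selecting the correct KKT substitutions and noting multiplier-independence. The main obstacle is i): although one could route it through Nagumo's theorem (Theorem~\ref{thm:nagumo}) and Proposition~\ref{prop:lie_deriv_tangent_cone} by writing $\mathcal{C}_{\bar g,\bar h}$ as a sublevel set of a pointwise maximum, that route requires verifying the nondegeneracy hypothesis $0 \notin \partial V$ on the boundary, which can genuinely fail when some $\bar h_j = 0$ (where $\nabla h_j$ and $-\nabla h_j$ are both active and $0$ lies in their convex hull). The direct comparison argument I propose sidesteps this entirely, so the only delicate bookkeeping that remains is confirming the sign propagation in the comparison step for both signs of the initial constraint value.
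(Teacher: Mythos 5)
Your proposal is correct and follows essentially the same route as the paper: part i) is proved there exactly as you do, via the QP feasibility conditions \eqref{eqn:kkt_qp_b}--\eqref{eqn:kkt_qp_c} and the comparison lemma \cite[Lemma 3.4]{Khalil} applied to each $g_i$ and $h_j$ separately, and parts ii)--iii) are deferred to \cite[Lemma 5.8]{allibhoy2022control}, whose content is precisely your KKT-substitution computation combined with Proposition~\ref{prop:equilibrium}. Your added observations (multiplier-independence of the identity in ii), and the reason Nagumo's theorem is awkward when some $\bar h_j = 0$) are sound but not needed.
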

\begin{proof}
    Statement i): $\xi^* = \Ga(\theta)$ are Lipschitz in $\theta$ and satisfy \eqref{eqn:kkt_qp_b} implying $\frac{d g_i(\theta)}{d t} = \nabla g_i(\theta)^\top \Ga(\theta) \leq -\alpha g_i(\theta)$ for all $i \in \{1, \dots, m\}$, according to the dynamics \eqref{eqn:exact_safe_grad_flow}.     
    Similarly, \eqref{eqn:kkt_qp_c} is satisfied, implying and $\frac{d h_j(\theta)}{d t} = - \alpha h_j(\theta)$ for $j \in \{1, \dots, l\}$. Therefore the sets $S_{g_i}=\{\theta: g_i(\theta)\leq \bar g_i \}$ and $S_{h_j}=\{\theta: |h_j(\theta)| \leq \bar h_j \}$ are invariant \cite[Lemma 3.4]{Khalil}, and since $\mathcal{C}_{\bar g, \bar h}$ is the intersection of all $S_{g_i}$ and $S_{h_j}$, then $\mathcal{C}_{\bar g, \bar h}$ is invariant. See \cite[Lemma 5.8]{allibhoy2022control} for proofs of statements ii) and iii).
\end{proof}

Proposition~\ref{prop:time_deriv_and_inv}i allows one to study trajectories on the invariant set $\mathcal{C}_{\bar g, \bar h}$, rather than all of $\mathbb{R}^n$. If $\mathcal{C}_{\bar g, \bar h}$ extends unbounded in some direction (it is not compact), then additional assumptions will need to be imposed for global stability, Theorem~\ref{thm:invariance_principle} requires a compact invariant set. Regardless, the invariance of $\mathcal{C}_{\bar g, \bar h}$ is useful in that it demonstrates that all $h_j(\theta(t))$ remain bounded above and below in time, and all $g_i(\theta(t))$ remain bounded above in time. 

\subsection{The Set-Valued Lie Derivative and Stability}
Consider the regular function $V_\epsilon: \mathbb{R}^n \rightarrow \mathbb{R}$ with parameter $\epsilon>0$,
\begin{equation} \label{eqn:Veps_def}
    V_\epsilon(\theta) = f(\theta) + \frac{1}{\epsilon} \sum_{i=1}^{m} [g_i(\theta)]_+ + \frac{1}{\epsilon} \sum_{j=1}^{l} |h_j(\theta)|.
\end{equation}
Then, using Proposition~\ref{prop:gen_grad}, and since $g(\theta)$ and $h(\theta)$ are differentiable, the generalized gradient is
\begin{align}
\partial V_\epsilon(\theta) =
& \biggl\{
\nabla f(\theta)
+ \frac{1}{\epsilon} \sum_{i=1}^m q_i \nabla g_i(\theta)
+ \frac{1}{\epsilon} \sum_{j=1}^l r_j \nabla h_j(\theta)
: \nonumber \\
& \;\;\; q \in Q(\theta),\;
r \in R(\theta)
 \biggr\}
\end{align}
where $q$ (associated with $g$) and $r$ (associated with $h$) are
\begin{equation}
Q(\theta) = \left\{
q \in \mathbb{R}^m :
q_i \in
\begin{cases}
\{0\} & \text{if } g_i(\theta) < 0 \\
\{1\} & \text{if } g_i(\theta) > 0 \\
{[0,c_1]} & \text{if } g_i(\theta) = 0 \quad
\end{cases}
\right\},
\end{equation}
\begin{equation}
R(\theta) = \left\{
r \in \mathbb{R}^l :
r_j \in
\begin{cases}
\{\sign(h_j(\theta))\} & \text{if } h_j(\theta) \neq 0 \\
{[-c_2,c_2]} & \text{if } h_j(\theta) = 0 \quad
\end{cases}
\right\},
\end{equation}
for some $c_1, c_2>0$, which are determined by the number of ``active'' functions (for $g_i = 0$ and $h_j = 0$) in order to satisfy the convex combination.

Therefore, the set-valued Lie derivative $\mathcal{L}_{\Ga} V_\epsilon(\theta)$ is
\begin{equation}
\begin{aligned}
& \mathcal{L}_{\Ga} V_\epsilon(\theta) = \bigg\{ \nabla f(\theta)^\top \Ga(\theta)
+ \frac{1}{\epsilon} \sum_{i=1}^m q_i \nabla g_i(\theta)^\top \Ga(\theta) \\
& \quad + \frac{1}{\epsilon} \sum_{j=1}^l r_j \nabla h_j(\theta)^\top \Ga(\theta)
: 
q \in Q(\theta),\;
r \in R(\theta)
\bigg\}.
\end{aligned}
\end{equation}
We take some element $p \in \mathcal{L}_{\Ga} V_\epsilon(\theta)$, and using Proposition~\ref{prop:time_deriv_and_inv}, we have $\mathcal{L}_{\Ga} f(\theta) = \nabla f(\theta)^\top \Ga(\theta)$ and
\begin{equation}
\begin{aligned}
& p = -\| \Ga(\theta) \|^2 + \alpha u^\top g(\theta) + \alpha v^\top h(\theta) \\
& \quad + \frac{1}{\epsilon} \sum_{i=1}^m q_i \nabla g_i(\theta)^\top \Ga(\theta) + \frac{1}{\epsilon} \sum_{j=1}^l r_j \nabla h_j(\theta)^\top \Ga(\theta) \, .
\end{aligned}
\end{equation}
From the rows of \eqref{eqn:kkt_qp_c}, we have
\begin{equation}
    r_j \nabla h_j(\theta)^\top \Ga(\theta) = \alpha r_j h_j(\theta) =-\alpha |h_j(\theta)|.
\end{equation}
and from the rows of \eqref{eqn:kkt_qp_b},
\begin{equation}
    q_i \nabla g_i(\theta)^\top \Ga(\theta) \leq -\alpha q_i g_i(\theta) = -\alpha [g_i(\theta)]_+.
\end{equation}
Therefore
\begin{equation}\label{eqn:bound_on_p_local1}
\begin{aligned}
& p \leq -\| \Ga(\theta) \|^2 +  \alpha \sum_{i=1}^m \left( u_i g_i(\theta) - \frac{[g_i(\theta)]_+}{\epsilon} \right) \\
& \quad  +  \alpha \sum_{j=1}^l \left( v_j h_j(\theta) - \frac{|h_j(\theta)|}{\epsilon}  \right) \, .
\end{aligned}
\end{equation}
Because $u_i g_i(\theta) \leq u_i [g_i(\theta)]_+$ and $v_j h_j(\theta) \leq |v_j| |h_j(\theta)|$ then
\begin{equation}\label{eqn:bound_on_p_local}
\begin{aligned}
& p \leq -\| \Ga(\theta) \|^2 +  \alpha \sum_{i=1}^m [g_i(\theta)]_+ \left( u_i - \frac{1}{\epsilon} \right) \\
& \quad  +  \alpha \sum_{j=1}^l | h_j(\theta)| \left( |v_j| - \frac{1}{\epsilon}  \right) \, .
\end{aligned}
\end{equation}
From this form of the bound on $p \in \mathcal{L}_{\Ga} V_\epsilon(\theta)$ we can see that if $\epsilon$ is chosen small, we can dominate the multiplier terms $u_i$ and $|v_j|$ in order to force $\max \mathcal{L}_{\Ga} V_\epsilon(\theta) < 0$. The constraint qualification, Assumption~\ref{assum:MFCQ-QP}, guarantees that $u, v$ are bounded. 

A simple condition which guarantees global asymptotic stability is that one of the inequality constraints $g_i(\theta)$ has compact levels -- or with many inequality constraints, the combination of sublevels yields a compact set. For example, consider $\theta \in \mathbb{R}^2$ with no equality constraints (so $\mathcal{C}_{\bar g, \bar h}$ reduces to $\mathcal{C}_{\bar g}$) and two inequality constraints. Suppose $\mathcal{C}_{\bar g} = \{\theta \in \mathbb{R}^2 \, | \, g_1(\theta) \leq \bar g \, , \, g_2(\theta) \leq \bar g \}$ for $\bar g > 0$ where $g_1(\theta) = -\theta_2$ and $g_2(\theta) =-1 + \theta_1^2 + \theta_2$. Therefore $g_i(\theta(t)) \leq g_i(\theta(0)) e^{-\alpha t}$ for $i=1,2$ and $\mathcal{C}_{\bar g}$ is compact and invariant for all $\bar g\geq 0$. 

\begin{assumption} \label{assum:C_bounded}
    For any $\theta$ there exists a compact $\mathcal{C}_{\bar g, \bar h}$ such that $\theta \in \mathcal{C}_{\bar g, \bar h}$.
\end{assumption}

Assumption~\ref{assum:C_bounded} implies any initial condition can be contained in a compact invariant set $\theta \in \mathcal{C}_{\bar g, \bar h}$, with invariance being guaranteed by Proposition~\ref{prop:time_deriv_and_inv}. Therefore Theorem~\ref{thm:invariance_principle} can directly be applied, with the Lyapunov function $V_\epsilon$ where $\epsilon>0$ is chosen sufficiently small to dominate all $u, v$, which are bounded on $\mathcal{C}_{\bar g, \bar h}$ by Proposition~\ref{prop:MFCQ-bounded}.

\begin{theorem} \label{thm:GAS_safe_grad}
    Let Assumptions \ref{assum:MFCQ-QP}, \ref{assum:C_bounded}, and \ref{assum:kkt} hold. Then the dynamics \eqref{eqn:exact_safe_grad_flow} are globally asymptotically stable with respect to the equilibrium $\theta^*$.
\end{theorem}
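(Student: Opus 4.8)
The plan is to invoke the Invariance Principle (Theorem~\ref{thm:invariance_principle}) with the regular Lyapunov function $V_\epsilon$ from \eqref{eqn:Veps_def}, exploiting the Lie-derivative bound \eqref{eqn:bound_on_p_local} already derived. First, I would fix an arbitrary initial condition $\theta(0) \in \mathbb{R}^n$. By Assumption~\ref{assum:C_bounded} there exists a compact set $\mathcal{S} = \mathcal{C}_{\bar g, \bar h}$ with $\theta(0) \in \mathcal{S}$, and by Proposition~\ref{prop:time_deriv_and_inv}i this set is positively invariant for \eqref{eqn:exact_safe_grad_flow}. This supplies the compact invariant set required by Theorem~\ref{thm:invariance_principle} and lets me restrict attention to trajectories evolving on $\mathcal{S}$.

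Next I would select the penalty parameter $\epsilon$. By Proposition~\ref{prop:MFCQ-bounded} the multipliers satisfy $u_i, |v_j| \le M(\theta)$ pointwise; the step I would carry out here is to upgrade this to a uniform bound $\bar M = \sup_{\theta \in \mathcal{S}} M(\theta) < \infty$ over the compact set $\mathcal{S}$. Choosing $\epsilon$ so that $1/\epsilon > \bar M$ makes each coefficient $(u_i - 1/\epsilon)$ and $(|v_j| - 1/\epsilon)$ appearing in \eqref{eqn:bound_on_p_local} strictly negative; since $[g_i(\theta)]_+ \ge 0$ and $|h_j(\theta)| \ge 0$, every $p \in \mathcal{L}_{\Ga} V_\epsilon(\theta)$ obeys $p \le -\|\Ga(\theta)\|^2 \le 0$, hence $\max \mathcal{L}_{\Ga} V_\epsilon(\theta) \le 0$ for all $\theta \in \mathcal{S}$.

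Then I would apply Theorem~\ref{thm:invariance_principle}: every trajectory converges to the largest invariant set $\mathcal{M}$ contained in $\mathcal{S} \cap \overline{\{\theta : 0 \in \mathcal{L}_{\Ga} V_\epsilon(\theta)\}}$. The bound $p \le -\|\Ga(\theta)\|^2$ shows that $0 \in \mathcal{L}_{\Ga} V_\epsilon(\theta)$ forces $\Ga(\theta) = 0$, and Proposition~\ref{prop:equilibrium} then yields $\theta = \theta^*$. Since $\theta^* \in \mathcal{C} \subseteq \mathcal{S}$ is an equilibrium, the candidate set collapses to the singleton $\{\theta^*\}$, which is trivially invariant, so $\mathcal{M} = \{\theta^*\}$ and the trajectory converges to $\theta^*$. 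As the initial condition was arbitrary, this establishes global attractivity.

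The main obstacle I expect is the uniform multiplier bound $\bar M < \infty$ on $\mathcal{S}$: Proposition~\ref{prop:MFCQ-bounded} only delivers a pointwise constant $M(\theta)$, so I would invoke the local boundedness and outer semicontinuity of the multiplier set-valued map $\theta \mapsto \Lambda(\theta)$ under MFCQ (Assumption~\ref{assum:MFCQ-QP}), together with compactness of $\mathcal{S}$, to conclude $\sup_{\theta \in \mathcal{S}} M(\theta) < \infty$. A secondary point worth addressing is that the invariance principle as applied gives global attractivity; to secure genuine Lyapunov stability and hence full global asymptotic stability, I would observe that the choice $1/\epsilon > \bar M$ renders $V_\epsilon$ an exact penalty function whose unique global minimizer is $\theta^*$ (by Assumption~\ref{assum:kkt}), so that $V_\epsilon - f(\theta^*)$ is a positive-definite, non-increasing Lyapunov function certifying stability of $\theta^*$.
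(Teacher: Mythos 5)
Your proposal is correct and follows essentially the same route as the paper's proof: restrict to a compact invariant set $\mathcal{C}_{\bar g,\bar h}$ via Assumption~\ref{assum:C_bounded} and Proposition~\ref{prop:time_deriv_and_inv}, choose $\epsilon$ with $1/\epsilon > \bar M$ so that the bound \eqref{eqn:bound_on_p_local} is strictly negative away from $\theta^*$, and apply Theorem~\ref{thm:invariance_principle} together with Proposition~\ref{prop:equilibrium} to identify the limit set as $\{\theta^*\}$. The two refinements you flag (justifying that the pointwise multiplier bound $M(\theta)$ yields a finite supremum over the compact set, and supplying the Lyapunov-stability half of global asymptotic stability via the exact-penalty property of $V_\epsilon$) are points the paper's terser proof leaves implicit.
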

\begin{proof}
    Any initial condition $\theta(t_0)$ lies in a particular set $\mathcal{C}_{\bar g, \bar h}$ for some $\bar g, \bar h>0$, and $\mathcal{C}_{\bar g, \bar h}$ is compact (Assumption~\ref{assum:C_bounded}) and invariant (Proposition~\ref{prop:time_deriv_and_inv}). Consider \eqref{eqn:bound_on_p_local} for $p \in \mathcal{L}_{\Ga} V_\epsilon(\theta)$. Choose $\epsilon>0$ such that $\epsilon<1/\bar M$ where $\bar M = \max_{\theta \in \mathcal{C}_{\bar g, \bar h}} M(\theta)$ for $M(\theta)$ given in Proposition~\ref{prop:MFCQ-bounded}. Therefore $p < 0 $ for all $\theta \in \mathcal{C}_{\bar g, \bar h}\setminus\{ \theta^* \}$ and by Theorem~\ref{thm:invariance_principle} we have the result.
\end{proof}

\emph{Comments on when $\mathcal{C}_{\bar g, \bar h}$ is unbounded}:
Proving global stability on a potentially unbounded feasible set $\mathcal{C}$ (which means all sets $\mathcal{C}_{\bar g, \bar h}$ are unbounded) is more challenging. One will need to find an arbitrarily large compact invariant $\mathcal{M}$ (so that it subsumes any initial condition) without the use of $\mathcal{C}_{\bar g, \bar h}$. Instead, one could use of sublevel sets of the function $V_\epsilon$ (with radial unboundedness), since compactness of $\mathcal{C}_{\bar g, \bar h}$ cannot be relied upon. One challenge which must be dealt with is, for a fixed $\bar V>0$, making $\epsilon$ small (to dominate $u,v$ in \eqref{eqn:bound_on_p_local}) makes the sublevel $\mathcal{M}=\{\theta : V_\epsilon (\theta) \leq \bar V \}$ small, due to the form in \eqref{eqn:Veps_def}. In other words, the choice of a small $\epsilon$ is at odds with the goal of finding a large invariant set $\mathcal{M}$, as the compact invariant set is \textit{dependent} on $\epsilon$.

\section{Extremum Seeking} \label{sec:extremum_seeking}
\subsection{Design}
In the absence of knowledge of $\nabla
f, \nabla g_i,$ and $\nabla h_j$ we can consider an approximation of the design in \eqref{eqn:Ga_def}, through the use of techniques used in extremum seeking to estimate the gradients. The GMC-ES dynamics are given as
\begin{align}
\dot{\hat \theta} =& k \omegaf \hat{\mathcal{G}} _\alpha \label{eqn:th_dyn}\\
\dot G_f =& - \omegaf ( G_f - (f( \theta)-\eta_J) M(t) )        \label{eqn:gf_dyn}\\
\dot G_{g_i} =& -  \omegaf (G_{g_i} - (g_i( \theta) - \eta_{g_i})M(t) ) \label{eqn:gg_dyn} \\
\dot G_{h_j} =& -  \omegaf (G_{h_j} - (h_j( \theta) - \eta_{h_j})M(t) )  \label{eqn:gh_dyn} \\
\dot \eta_f =& -  \omegaf ( \eta_f -f( \theta )) \label{eqn:etaf_dyn}\\
\dot \eta_{g_i} =& -  \omegaf ( \eta_{g_i} -g_i( \theta )) \label{eqn:etag_dyn}\\
\dot \eta_{h_j} =& -  \omegaf ( \eta_{h_j} -h_j( \theta )) \label{eqn:etah_dyn}
\end{align}
for $i = 1, \dots, m$ and $j = 1, \dots, l$, where $\hat{\mathcal{G}}_\alpha$ represents the approximate solution to \eqref{eqn:Ga_def} and is defined as
the solution to the quadratic program with $\alpha>0$:
\begin{align}
\hat{\mathcal{G}}_\alpha = 
\underset{\xi \in \mathbb{R}^n}{\arg\min} \quad
& \left\{ \tfrac12 \|\xi + G_f\|^2 \right\} \nonumber \\
\text{subject to} \quad
& J_g \, \xi \leq - \alpha \eta_g, \label{eqn:approx_Ga_def} \\
& J_h \, \xi = - \alpha \eta_h . \nonumber
\end{align}
We define the relation
\begin{equation} \label{eqn:theta_theta_hat_S}
    \theta \coloneqq \hat \theta + S(t)
\end{equation}
for $i = 1, \cdots, m$ and $j = 1, \cdots, l$ where $G_f$, $G_{g_i}$, $G_{h_j} \in \mathbb{R}^n$ are estimates of $\nabla f(\theta)$, $\nabla g_i(\theta)$, $\nabla h_j(\theta)$, and 
$\eta_f, \eta_{g_i}, \eta_{h_j} \in \mathbb{R}$ are filtered measurements of $f(\theta), g_i(\theta), h_j(\theta)$ respectively. The Jacobian estimates of $g$ and $h$ are grouped as 
\begin{align}
    J_g &= 
    \begin{bmatrix}
    G_{g_1}^\top \\
    \vdots \\
     G_{g_m}^\top
    \end{bmatrix}, & \quad
    J_h &= 
    \begin{bmatrix}
     G_{h_1}^\top \\
    \vdots \\
    G_{h_k}^\top
    \end{bmatrix}.
\end{align}
for a design parameters $k, \omegaf, \alpha, \rho_\alpha, \epsilon_h >0$. Note that $\eta_g = [\eta_{g_1}, \cdots, \eta_{g_m}]^\top$ and $\eta_h = [\eta_{h_1}, \cdots, \eta_{h_k}]^\top$. As in classical extremum seeking, the perturbation signal $S$ and demodulation signal $M$ are given by $S_i(t) = a \sin(\omega_i t)$ and $M_i(t) = \frac{2}{a} \sin(\omega_i t) $
and contain additional design parameters $\omega_i, a \in \mathbb{R}_{>0}$. The parameter dynamics are a function of filtered measurements and gradients (shown in \eqref{eqn:approx_Ga_def} without arguments for space) of the nonlinear programming problem: $\hat{\mathcal{G}}_\alpha = \hat{\mathcal{G}}_\alpha(G_f, G_{g_1}, \dots, G_{h_1}, \dots, \eta_{f}, \dots, \eta_{g_1}, \dots, \eta_{h_1}, \dots)$.

\textbf{Naming Conventions:}  
following \cite{allibhoy2022control}, we use $\alpha$ for the constraint enforcement parameter, while \cite{williamssemiglobalsafety} used $c$. In \cite{williamssemiglobalsafety}, $h$ was used for inequality constraints, whereas here, $h$ are the equality constraints, and $g$ are the inequalities (where $g\leq0$ defines the feasible set, and not $g\geq 0$). In \cite{williamssemiglobalsafety}, $\alpha$ was used as a parameter in the Lyapunov analysis, but here the equivalent parameter is $\tfrac{1}{\epsilon}$, using conventions from \cite{allibhoy2022control}.

\subsection{Analysis}
In this section we give a high level preview of the analysis used to claim SPA stability of the system \eqref{eqn:th_dyn}-\eqref{eqn:etah_dyn}. 

We first perform the series of transformations \cite[Section 3]{nesic2010unifying} and summarize the details here:
\begin{enumerate}
    \item Transform \eqref{eqn:th_dyn}-\eqref{eqn:etah_dyn} to a new time variable $\tau = \omegaf t$.
    \item In the time scale $\tau$ perform averaging in the classical sense \cite{Khalil} over the common period of perturbation, deriving an average system.
    \item Make another time transformation to $s = k \tau$, and taking $k=0$ derive a singularly perturbed or ``reduced'' system and a boundary layer system, which is UGAS.
\end{enumerate}

We arrive at an averaged and then reduced system in the new time variable $s$:
\begin{equation}
    \frac{d \thetar}{d s} = \hat{\mathcal{G}}_{\alpha, \text{r}}(\thetar; a) \label{eqn:reduced_system}
\end{equation}
where 
\begin{align}
\hat{\mathcal{G}}_{\alpha, \text{r}}(\thetar; & a) = 
 \underset{\xi \in \mathbb{R}^n}{\arg\min} \quad
\left\{ \tfrac12 \|\xi + \nabla f(\theta_r) + O(a) \|^2 \right\} \nonumber \\
 \text{subj. to} \,
& \left( \frac{\partial g(\theta_r)}{\partial \theta_r} + O(a) \right)\, \xi \leq - \alpha \left( g(\theta_r) + O(a) \right), \label{eqn:red_avg_Ga_def} \\
& \left (\frac{\partial h(\theta_r)}{\partial \theta_r} + O(a) \right)\, \xi = - \alpha \left( h(\theta_r) + O(a) \right) . \nonumber
\end{align}

The terms $O(a)$ represent Lipschitz functions of $\thetar$ that have a norm which can be bounded linearly in $a>0$ --- so they can be made arbitrarily small by choosing a sufficiently small $a>0$. Since the problem data of the QP \eqref{eqn:red_avg_Ga_def} are perturbed by Lipschitz quantities, then by \cite[Theorem 3.10]{daniel1974continuity}, for sufficiently small $a$, we may express the solution $\hat{\mathcal{G}}_{\alpha, \text{r}}(\thetar; a)$ as
\begin{equation} \label{eqn:Ga_exact_plus_pert}
    \frac{d \thetar}{d s} = \hat{\mathcal{G}}_{\alpha, \text{r}}(\thetar; a) = \mathcal{G}_{\alpha}(\thetar) + O(a)
\end{equation}
where $\mathcal{G}_{\alpha}(\thetar)$ is the solution to the unperturbed QP \eqref{eqn:Ga_def}, where $a>0$ is taken sufficiently small. Therefore, the averaged and reduced system dynamics can be written as the exact safe gradient flow system with a small additive disturbance (proportional to the amplitude of the perturbation signals). We give the following without proof due to space limits.

\begin{theorem} [Stable and Practically Constrained] \label{thm:spa_stable_and_prac_constr}
Let Assumptions \ref{assum:MFCQ-QP},\ref{assum:kkt}, and \ref{assum:C_bounded} hold. Then there exists $\beta_\theta \in \mathcal{KL}$ such that: for any $\Delta, \delta, \nu>0$ there exist $r, \omegaf^*, a^*>0$, such that for any $\omegaf \in (0, \omegaf^*)$, $a \in (0,a^*)$, there exists $k^*(a)>0$ such that for any $k \in (0, k^*(a))$ the solutions to \eqref{eqn:th_dyn}-\eqref{eqn:etah_dyn} satisfy
\begin{align*}
    \| \hat\theta(t) - \theta^* \| &\leq \beta_\theta \left( \|\hat \theta (t_0) - \theta^*\|, k \cdot \omegaf \cdot (t-t_0) \right) + \nu, \label{eqn:theta_KL_bound} \\
    g_i(\theta(t))  &\leq g_i(\theta(t_0)) e^{- \alpha k \omegaf (t-t_0)} + O(\delta) \\
    \| h_j(\theta(t)) \| &\leq \| h_j(\theta(t_0)) \| e^{-\alpha k \omegaf (t-t_0)} + O(\delta)    
\end{align*}
for all $ \| \hat\theta(t_0) - \theta^* \| \leq \Delta$, $\| G_{g_i}(t_0) - \nabla g_i(\theta(t_0))\| \leq r$, $\| G_{h_j}(t_0) - \nabla h_j(\theta(t_0))\| \leq r$, $\| \eta_{g_i}(t_0) - g_i(\theta(t_0))\| \leq r$, $\| \eta_{h_j}(t_0) - h_j(\theta(t_0))\| \leq r$, for each $i=1, \dots, m$, $j=1, \dots, l$ and for all all $t \geq t_0\geq 0$.
\end{theorem}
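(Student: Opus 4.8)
The plan is to follow the three–time–scale reduction of \cite{nesic2010unifying} sketched in the Analysis subsection, thereby reducing the claim to an SPA stability statement for the perturbed safe gradient flow \eqref{eqn:Ga_exact_plus_pert}, and then to transport that statement back to the original coordinates while handling the constraint bounds by a separate comparison–lemma argument. First I would carry out the stated transformations: rescale time to $\tau=\omegaf t$, average \eqref{eqn:th_dyn}--\eqref{eqn:etah_dyn} over the common period of $S,M$ in the classical sense \cite{Khalil}, rescale again to $s=k\tau$, and identify the fast filter states $G_f,G_{g_i},G_{h_j},\eta_f,\eta_{g_i},\eta_{h_j}$ as a UGAS boundary–layer subsystem and the slow dynamics as the reduced system \eqref{eqn:reduced_system}. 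The averaging step is where the demodulated estimates collapse to leading order onto $\nabla f,\nabla g_i,\nabla h_j$ and the filtered values onto $f,g_i,h_j$, each up to an $O(a)$ residual; \cite[Theorem 3.10]{daniel1974continuity} then delivers the key identity \eqref{eqn:Ga_exact_plus_pert}, namely that the reduced flow equals $\Ga(\thetar)+O(a)$. The $r$–neighborhood initialization of the filter states in the theorem hypotheses is exactly the region of attraction for this UGAS boundary layer, guaranteeing the fast estimators track the true gradients.

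The analytical core is the second step: establishing SPA stability (in the parameter $a$) of the reduced system $\tfrac{d\thetar}{ds}=\Ga(\thetar)+O(a)$. Fixing $\Delta$, Assumption~\ref{assum:C_bounded} places $\bar B_\Delta(\theta^*)$ inside a compact $\mathcal{C}_{\bar g,\bar h}$. On a slightly enlarged sublevel set I would use Nagumo's Theorem~\ref{thm:nagumo} together with Proposition~\ref{prop:lie_deriv_tangent_cone} to certify forward invariance under the perturbed field: since the perturbation only shifts the decay inequalities \eqref{eqn:kkt_qp_b}--\eqref{eqn:kkt_qp_c} by $O(a)$, the Lie derivatives of $g_i$ and $|h_j|$ remain nonpositive on the enlarged boundary for $a$ small. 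On that compact invariant set the multipliers are bounded by Proposition~\ref{prop:MFCQ-bounded}; I then fix $\epsilon$ small enough that \eqref{eqn:bound_on_p_local} yields $\max\mathcal{L}_{\Ga}V_\epsilon(\theta)\le -c<0$ on $\mathcal{C}_{\bar g,\bar h}\setminus \bar B_\nu(\theta^*)$, exactly as in the proof of Theorem~\ref{thm:GAS_safe_grad}, and finally choose $a$ small so the extra $O(a)$ term in the perturbed Lie derivative cannot overcome $-c$ outside $\bar B_\nu(\theta^*)$. Theorem~\ref{thm:invariance_principle} then drives trajectories into $\bar B_\nu(\theta^*)$, while robustness of the GAS estimate of Theorem~\ref{thm:GAS_safe_grad} to the $O(a)$ perturbation supplies the $\mathcal{KL}$ bound $\beta_\theta(\cdot,s)+\nu$; undoing $s=k\omegaf t$ produces the stated argument $k\omegaf(t-t_0)$.

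Third, I would invoke the averaging and singular–perturbation results of \cite{nesic2010unifying} (UGAS boundary layer plus SPA-stable reduced system implies SPA-stable full system) to transfer this to \eqref{eqn:th_dyn}--\eqref{eqn:etah_dyn}; this also fixes the quantifier order, with $\omegaf^*,a^*$ selected from $(\Delta,\delta,\nu)$ and $k^*(a)$ from $a$. For the constraint estimates I would argue directly on the constraint values rather than through $\theta^*$: the perturbed rows \eqref{eqn:kkt_qp_b}--\eqref{eqn:kkt_qp_c}, together with the averaging residual and the $O(a)$ offset from $S(t)$ in $\theta=\hat\theta+S$, give $\tfrac{d g_i}{ds}\le -\alpha g_i+O(a)$ and $\tfrac{d}{ds}|h_j|\le -\alpha|h_j|+O(a)$, so the comparison lemma yields the exponential–plus–$O(\delta)$ bounds, with the rate $\alpha$ becoming $\alpha k\omegaf$ once real time is restored and $a$ chosen so the residual is $O(\delta)$.

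The main obstacle is the second step: simultaneously controlling the $\epsilon$–dependence of $V_\epsilon$ and the compact invariant set against the $O(a)$ perturbation, in the correct order of quantifiers. One must fix the compact set, hence $\bar M=\max_{\mathcal{C}_{\bar g,\bar h}}M(\theta)$ and hence $\epsilon$, \emph{before} shrinking $a$ --- the precise tension flagged in the Comments following Theorem~\ref{thm:GAS_safe_grad}. Keeping the enlarged set invariant via Nagumo's theorem while preserving the strict decrease $\max\mathcal{L}_{\Ga}V_\epsilon\le -c$ away from $\bar B_\nu(\theta^*)$ under the perturbed field, uniformly in the chosen parameters, is the delicate part of the argument.
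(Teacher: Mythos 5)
Your proposal follows essentially the same route as the paper's (sketched) argument: the three-step time-scale reduction of \cite{nesic2010unifying} to arrive at \eqref{eqn:Ga_exact_plus_pert}, a Lyapunov argument with the same $V_\epsilon$ on a compact invariant $\mathcal{C}_{\bar g,\bar h}$ with $\epsilon$ fixed before $a$ is shrunk, transfer of SPA stability back via \cite[Theorem 1]{nesic2010unifying}, and comparison-lemma bounds on $g_i$ and $|h_j|$ from the perturbed KKT rows. One small correction: the $r$-neighborhood initialization of the estimator states is not needed as a region of attraction for the boundary layer (which is UGAS); it is imposed so that the \emph{estimated} QP \eqref{eqn:approx_Ga_def} remains feasible and Lipschitz during the transient, since MFCQ is only assumed for the QP built from the true problem data --- your argument should make this well-posedness point explicit rather than attributing the restriction to the boundary-layer dynamics.
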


Proof idea: under the assumptions discussed in Section~\ref{sec:safe_grad_flow}, the dynamics $\mathcal{G}_{\alpha}(\thetar)$ are locally Lipschitz, implying that the averaged and reduced dynamics of the ES scheme in \eqref{eqn:Ga_exact_plus_pert} are also locally Lipschitz for small $a>0$, so that the MFCQ assumption still holds for the QP \eqref{eqn:red_avg_Ga_def}. This is crucial as the work of \cite{nesic2010unifying} requires this system to be locally Lipschitz, admitting classical solutions (see the conditions in \cite[Lemma 1, Lemma 2]{tan2005non}, which are relied upon in the proof sketch of \cite[Theorem 1]{nesic2010unifying}). The result \cite[Theorem 1]{nesic2010unifying} states that if \eqref{eqn:Ga_exact_plus_pert} is SPA stable, then the GMC-ES scheme in \eqref{eqn:th_dyn}-\eqref{eqn:etah_dyn} will also be SPA stable. Theorem~\ref{thm:spa_stable_and_prac_constr} can be argued for using \cite[Theorem 1]{nesic2010unifying} and a similar Lyapunov based argument with the same $V_\epsilon$ which was used in Section~\ref{sec:safe_grad_flow} to study the exact system $\dot \theta= \mathcal{G}_\alpha(\theta)$. See \cite{williamssemiglobalsafety} as well, which provides a similar analysis albeit under a single inequality constraint (so $m = 1, l =0$). Finally, since the Lipschitzness of \eqref{eqn:approx_Ga_def} may not hold for the transient if the estimators in \eqref{eqn:gg_dyn}, \eqref{eqn:gh_dyn}, \eqref{eqn:etag_dyn} and \eqref{eqn:etah_dyn} are not close to the true values $\nabla g_i(\theta(t)), \nabla h_j(\theta(t)), g_i(\theta(t))$ and $h_j(\theta(t))$, we restrict the initialization of these estimators to be close to the their true values, so that \cite[Theorem 1]{nesic2010unifying} applies.

\begin{figure}[t]
\centering
  \includegraphics[width=.49\textwidth]{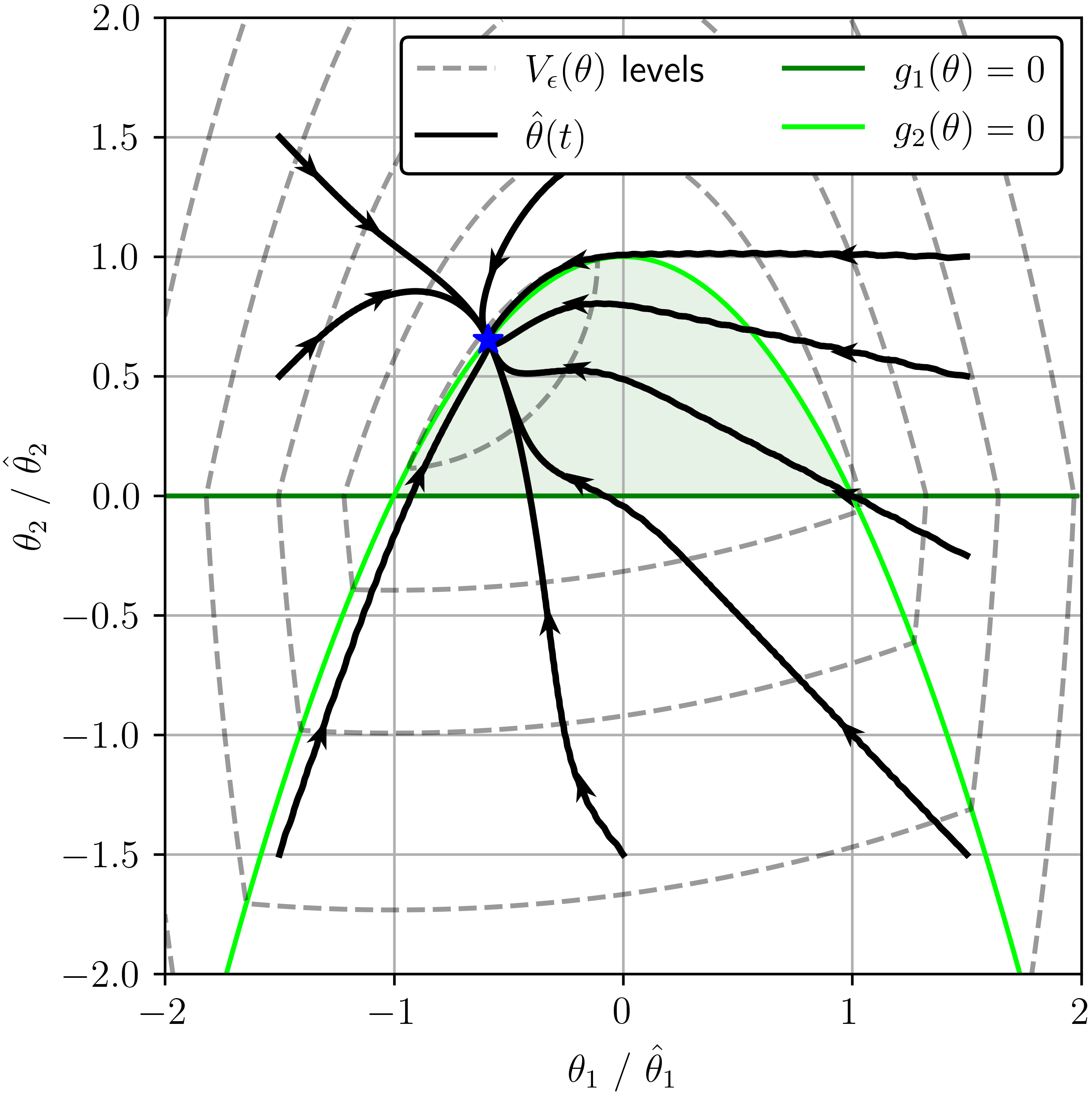}
\caption{Trajectories $\hat \theta(t)$ plotted from various initial conditions along with the zero level sets of $g_1, g_2$ and level sets of $V_\epsilon$ with $\epsilon = 0.1$. The inequality set $g(\theta) \leq 0$ is shaded in green, and the point $\theta^*$ is the blue star.}
\label{fig:example_state}
\end{figure}

\section{2D Example} \label{sec:example}

Consider the problem below with unknown functions 
\begin{align*}
    f(\theta) &= (\theta_1 + 1)^2 + (\theta_2-1)^2 \, , \\
    g_1(\theta) &= -\theta_2 \, , \\
    g_2(\theta) &= -1 + \theta_1^2 + \theta_2 \, .
\end{align*}
 The design constants are chosen as $a= 0.1 $, $\alpha=1 $, $k= 0.03$, $\omegaf= 0.5$, $\omega_1= 10$, and $\omega_2= 13$. The GMC-ES scheme is integrated with the Euler method and a $\Delta t = 0.048$. 
 Fig.~\ref{fig:example_state} demonstrates the performance of the algorithm in parameter space.

\emph{Assumption satisfaction:} Assumption~\ref{assum:MFCQ-QP}(ii) is nontrivial (no equalities). 
Since \(\nabla g_1(\theta)=(0,-1)^\top\) and \(\nabla g_2(\theta)=(2\theta_1,1)^\top\), when \(\theta_1\neq 0\) the two gradients are linearly independent, so we can find a single direction \(d\) with \(\nabla g_i(\theta)^\top d<0\) for both \(i=1,2\). When \(\theta_1=0\) the gradients are opposite, \((0,-1)\) and \((0,1)\). If both were active at the QP solution \(\xi^\ast\), the active equalities \(-\xi_2^\ast-\alpha \theta_2=0\) and \(\xi_2^\ast+\alpha(-1+\theta_2)=0\) implies \(-\alpha\theta_2=\alpha(1-\theta_2)\), a contradiction; hence \(|\mathcal A_{\mathrm{QP}}(\theta)|\le 1\). If the active set is \(\{i\}\), take \(d=-\nabla g_i(\theta)\) so that \(\nabla g_i^\top d=-\|\nabla g_i(\theta)\|^2<0\); if it is empty, the condition is vacuous. Therefore Assumption~\ref{assum:MFCQ-QP} holds for all \(\theta\).
Since $f$ is strongly convex and the feasible set is convex with Slater’s condition, \eqref{eqn:nonlinear_program} has a unique minimizer $\theta^*$, and  the KKT conditions are necessary and sufficient, so there exists at least one multiplier pair $(u^*,v^*)$ such that $(\theta^\ast,u^\ast,v^\ast)$ satisfies \eqref{eqn:kkt_np_all}, and no other $\theta$ does. Hence Assumption~\ref{assum:kkt} holds. One can also check that Assumption~\ref{assum:C_bounded} also holds.

\emph{Finding $\theta^*$:} The KKT conditions \eqref{eqn:kkt_np_all} state there exist $u_1,u_2\ge 0$ such that
\[
\begin{aligned}
&\text{(stationary)} && 2(\theta_1+1)+2u_2\theta_1=0,\\ & && 2(\theta_2-1)-u_1+u_2=0,\\
&\text{(feasibility)} && \theta_2\ge 0,\ \ \theta_1^2+\theta_2\le 1,\\
&\text{(complementary)} && u_1\,\theta_2=0,\ \ u_2\,(\theta_1^2+\theta_2-1)=0.
\end{aligned}
\]
Suppose $g_2$ is active at the solution: $\theta_1^2+\theta_2=1$ and $u_2\ge 0$.
If $\theta_2=0$ then $\theta_1=\pm1$, but neither choice satisfies the stationary condition with $u_1,u_2\ge 0$, so $\theta_2>0$ and thus $u_1=0$.
With $u_1=0$, the stationary condition gives $u_2=2(1-\theta_2)$ and, using $g_2=0$ (i.e., $\theta_2=1-\theta_1^2$), we obtain
\[
u_2=2\theta_1^2,\qquad (1+u_2)\theta_1+1=0 \ \Rightarrow\ 2\theta_1^3+\theta_1+1=0.
\]
This cubic has a unique real root and so $\theta_1^\ast=-0.58975,$
and $\theta_2^\ast=1-(\theta_1^\ast)^2 = 0.65219$.

\section{Conclusion}
We introduced the Generalized Multi-Constraint Extremum Seeking algorithm, extending Safe ES to handle multiple unknown inequality and equality constraints. Using the safe gradient flow with ES-based gradient estimates, we established semiglobal practical asymptotic stability and practical constraint satisfaction. This work broadens the use of ES to general nonlinear programming problems, providing a real-time, constraint-enforcing optimization tool.

\bibliographystyle{IEEEtranS}

\end{document}